\newtheorem*{rep@theorem}{\rep@title}
\newcommand{\newreptheorem}[2]{%
\newenvironment{rep#1}[1]{%
 \def\rep@title{#2 \ref{##1}}%
 \begin{rep@theorem}}%
 {\end{rep@theorem}}}
 \newtheorem{thm}{Theorem}[section]
      \newtheorem{lemma}[thm]{Lemma}
      \newtheorem{prop}[thm]{Proposition}
      \newtheorem{cor}[thm]{Corollary}
      \newtheorem*{thm*}{Theorem}
     \theoremstyle{definition}
      \newtheorem{eg}{Example}[section]
      \newtheorem*{eg*}{Example}
      \theoremstyle{remark}
      \newtheorem{rk}{Remark}[section]
\newcommand{\Zee}{\mathbb{Z}}
\newcommand{\Qee}{\mathbb{Q}}
\newcommand{\Cee}{\mathbb{C}}
\newcommand{\pa}{p_{\alpha}}
\newcommand{\da}{d_{\alpha}}
\newcommand{\vo}{v^{\text{old}}}
\newcommand{\vn}{v^{\text{new}}}
\newcommand{\exactmid}{\mid\mspace{-2mu}\mid}
\title{Properties of solutions to Pell's equation over the polynomial ring\thanks{This research was supported by ERC grant n$^{\text{o}}$ 670239}}
\author{Nikoleta Kalaydzhieva\thanks{Email: zcahndk@ucl.ac.uk; OrciD: 0000-0002-9393-283X} \\ \emph{University College London}}
\begin{document} 
\maketitle{}

\begin{abstract}


 In the classical theory, a famous by-product of the continued fraction expansion of quadratic irrational numbers $\sqrt{D}$ is the solution to Pell's equation for $D$. It is well-known that, once an integer solution to Pell's equation exists, we can use it to generate all other solutions $(u_n,v_n)_{n\in\Zee}$. Our object of interest is the polynomial version of Pell's equation, where the integers are replaced by polynomials with complex coefficients. We then investigate the factors of $v_n(t)$. In particular, we show that over the complex polynomials, there are only finitely many values of $n$ for which $v_n(t)$ has a repeated root. Restricting our analysis to $\Qee[t]$, we give an upper bound on the number of ``new'' factors of  $v_n(t)$ of degree at most $N$. Furthermore, we show that all ``new'' linear rational factors of $v_n(t)$ can be found when $n\leq 3$, and all ``new'' quadratic rational factors when $n\leq 6$. 
\end{abstract}

\section*{Introduction}
Pell's equation is defined to be
\begin{align}\label{nPell}
	x^2-Dy^2=1,
\end{align}
 and classically solved in positive integers $x=u,\ y=v$, for a given non-zero positive integer $D$, which is not a square. 

If we take the solution $(u,v)$ in which $v$ is the smallest positive integer, then we can use it to generate all other solutions to (\ref{nPell}) by 
\begin{align}\label{solgen}
	u_n\pm v_n\sqrt{D}=\pm\left(u+v\sqrt{D}\right)^n.
\end{align}

In this thesis we are interested in the polynomial analogue to the integers case. Indeed, we study solutions $u(t),v(t)\in\Cee[t]$, with $v\neq 0$, to Pell's equation for a polynomial $D(t)$ with coefficients in $\Cee$. If (\ref{nPell}) is solvable, we take its \emph{fundamental solution}, the one in which $v$ has minimal degree, and obtain all other solutions $(u_n(t),v_n(t))_{n\in\Zee}$ in the same way as in the classical case, using (\ref{solgen}).

Our goal, in the first part of the thesis, is to better understand the polynomials $v_n(t)$ that arise in the solutions of Pell's equation when $D(t)\in\Qee[t]$. In the classical case, when $D$ is a square-free, positive integer, it has been of great interest to factor $v_n\in\Zee$, see \cite{Brillhart1988}. Additionally, Lehmer \cite{Lehmer1928} showed that in certain cases $v_n\in\Zee$ factors into many parts. However, we will see that in the polynomial case the factors over $\Qee[t]$ of $v_n(t)$ are very controlled.

Similar to the integers case, we have that $gcd(v_n(t),v_m(t))=v_{gcd(m,n)}(t)$. In particular, if $m\mid n$ then $v_m(t)\mid v_n(t)$, and $v_1(t)\mid v_n(t)$, for all $n$. Furthermore, we will also show that $gcd\left(v_m(t),\ v_n(t)/v_m(t)\right)=1$, which is not always the case over the integers: there, if a prime $p\mid v_n$, but $p^2\nmid v_n$, then $p^2\mid v_{np}$; in other words $p\mid gcd\left(v_{n},\ v_{np}/v_n\right)$.

Over $\Cee[t]$, we can write $v_n(t)=\vo_n(t)\vn_n(t)$, where $\vo_n(t)$ is a product of the factors of $v_n(t)$ that also divide $v_m(t)$ for some $m<n$, and $\vn_n(t)$ are the remaining factors, including multiplicity. Then we obtain
\begin{align*}
	v_n(t)&=\prod_{m\mid n}\vn_m(t) \text{ and}\\
	\vn_n(t)&=\prod_{m\mid n}v_m^{\mu\left(\frac{n}{m}\right)}(t),
\end{align*}
where the latter follows from the product form of Möbius inversion.
Furthermore, the $\vn_n$ are pairwise co-prime, so we study their factors. Our first goal is to understand whether $\vn_n(t)$ ever has any repeated factors. It turns out that for any fixed $D(t)\in\Cee[t]$, there are only finitely many $n$ for which $\vn_n(t)$ has repeated factors. This comes out as a consequence of

\begin{thm}
 	For any polynomial $D(t)\in\Cee[t]$, for which the associated Pell's equation has a fundamental solution $(u(t),v(t))$, we define
 	\begin{align*}
 	R(D):=\{\alpha\in\Cee : (t-\alpha)^2 \mid \vn_n(t)\text{ for some }n\}.
 	\end{align*}
 	Then $\# R(D)\leq \deg{u}-1$.
 \end{thm}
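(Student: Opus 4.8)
The plan is to recognise the solution polynomials as Chebyshev values and then convert the repeated-root condition into a statement about the critical points of $u$. Working in the quadratic extension $\Cee(t)[\sqrt{D}]$ and setting $\epsilon = u + v\sqrt{D}$, so that $\epsilon^{-1} = u - v\sqrt{D}$ by the Pell relation $u^2 - Dv^2 = 1$, the generation law (\ref{solgen}) gives $u_n = \tfrac12(\epsilon^n + \epsilon^{-n})$ and $v_n\sqrt{D} = \tfrac12(\epsilon^n - \epsilon^{-n})$. Since $\epsilon + \epsilon^{-1} = 2u$, the classical power-sum identities for the Chebyshev polynomials $T_n$ and $U_{n-1}$ yield
\begin{align*}
 u_n = T_n(u) \quad\text{and}\quad v_n = v\,U_{n-1}(u).
\end{align*}
I would record the two facts I need about these polynomials: $T_n' = n\,U_{n-1}$, and $U_{n-1}$ has $n-1$ distinct roots, namely $\cos(k\pi/n)$ for $1 \le k \le n-1$, none equal to $\pm 1$ (indeed $U_{n-1}(\pm1) = \pm n \neq 0$).

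Next I would show that every repeated root lies among the zeros of $u'$; since $\deg u' = \deg u - 1$ (and $u$ is non-constant whenever the Pell equation has a nontrivial fundamental solution), this gives $\# R(D) \le \deg u - 1$ at once. Suppose $(t-\alpha)^2 \mid \vn_n(t)$. As $\vn_n \mid v_n$ we get $(t-\alpha)^2 \mid v_n$, i.e. $v_n(\alpha) = v_n'(\alpha) = 0$. Evaluating $u_n^2 - D v_n^2 = 1$ at $\alpha$ gives $u_n(\alpha)^2 = 1$, and differentiating the same identity and evaluating at $\alpha$ gives $2u_n(\alpha)u_n'(\alpha) = 0$; since $u_n(\alpha) = \pm 1 \neq 0$ this forces $u_n'(\alpha) = 0$. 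By the first identity, $u_n' = n\,U_{n-1}(u)\,u'$, so
\begin{align*}
 U_{n-1}\big(u(\alpha)\big)\,u'(\alpha) = 0.
\end{align*}

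It remains to exclude $u'(\alpha) \neq 0$. If $u'(\alpha) \neq 0$ then $U_{n-1}(u(\alpha)) = 0$, so $u(\alpha)$ is a simple root of $U_{n-1}$ and $t \mapsto U_{n-1}(u(t))$ vanishes to order exactly $1$ at $\alpha$ (its derivative there is $U_{n-1}'(u(\alpha))\,u'(\alpha) \neq 0$). Since $v_n = v\,U_{n-1}(u)$ and $(t-\alpha)^2 \mid v_n$, the factor $v$ must then vanish at $\alpha$; but $v(\alpha) = 0$ forces $u(\alpha)^2 = 1$ by the Pell relation, whence $U_{n-1}(u(\alpha)) = U_{n-1}(\pm1) = \pm n \neq 0$, contradicting $U_{n-1}(u(\alpha)) = 0$. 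Therefore $u'(\alpha) = 0$, and $R(D)$ is contained in the zero set of $u'$. The main obstacle is really the opening identification $u_n = T_n(u)$, $v_n = v\,U_{n-1}(u)$: once the solutions are recognised as Chebyshev values, the two competing sources of zeros of $v_n$—the old zeros coming from $v$, where $u = \pm1$, and the zeros of $U_{n-1}(u)$, where $u \neq \pm1$—cannot coincide, and this dichotomy is exactly what collapses a repeated root onto a critical point of $u$. I would finally double-check the degenerate cases ($n=1$, where $U_0 \equiv 1$ and the conclusion follows directly from differentiating $u^2 - Dv^2 = 1$, and the non-vanishing of $u'$) to be sure the count $\deg u - 1$ is never vacuous or off by one.
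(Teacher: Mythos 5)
Your proof is correct, and it reaches the paper's conclusion --- that every repeated root of any $\vn_n(t)$ is a critical point of $u$, so there are at most $\deg u-1$ of them --- by a genuinely different route. The paper never invokes Chebyshev polynomials: it factors $v_n=v\prod_{m\mid n,\,m>1}\psi_m(u)$ through cyclotomic polynomials, shows (Lemma \ref{vn}) that $\psi_n$ has the distinct simple roots $\cos(r\pi/n)$ with $(r,n)=1$, and then argues on $\vn_n(t)=\psi_n(u(t))$ directly: since the cosines are distinct, a double root $\alpha$ of $\psi_n(u(t))$ must be a double root of the single factor $u(t)-\cos(\pi r/n)$ vanishing at $\alpha$, and differentiating that divisibility gives $(t-\alpha)\mid u'(t)$; the case $n=1$ needs a separate computation from $(t-\alpha)^4\mid u^2-1$. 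You instead package the same structure as $u_n=T_n(u)$, $v_n=v\,U_{n-1}(u)$ (your $U_{n-1}(u)$ is, up to the constant $2^{n-1}$, exactly the paper's $\prod_{m\mid n,\,m>1}\psi_m(u)$), extract $u_n'(\alpha)=0$ from the differentiated Pell identity, and then use $T_n'=nU_{n-1}$ together with the dichotomy that $v$ vanishes only where $u=\pm1$, while $U_{n-1}(\pm1)\neq 0$, to force $u'(\alpha)=0$. What your version buys: a treatment uniform in $n\geq 1$ (the $n=1$ case is just $U_0\equiv 1$), resting only on standard Chebyshev facts and requiring no cyclotomic machinery. What the paper's version buys: finer local information that it exploits afterwards --- the exact multiplicity statement (if $(t-\alpha)^k\exactmid u'(t)$ and $u(\alpha)=\cos(\pi r/n)$ then $(t-\alpha)^{k+1}\exactmid\vn_n(t)$), and the explicit condition $u(\alpha)=\cos(\pi r/n)$ attached to each repeated root, which is what drives the field-degree and Galois-theoretic analysis of sections \ref{2} and \ref{3}. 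Your closing dichotomy (zeros of $v$ versus zeros of $U_{n-1}(u)$ cannot coincide) is the precise analogue of the paper's use of the distinctness of the cosine roots, so the two proofs are logically parallel even though no step is shared verbatim.
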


The proof actually gives us a finite algorithm that yields all repeated roots of $\vn_n$ for all $n$. It turns out that they come from the factors of $u^{\prime}(t)$. Suppose \\$(t-\alpha)^k\exactmid u^{\prime}(t)$, for $k\geq 1$. We show that if $u(\alpha)=\cos\frac{\pi r}{n}$, for some $r<n$, with $(r,n)=1$, then $(t-\alpha)^{k+1}\exactmid \vn_n(t)$. 

If we then restrict ourselves to working over $\Qee[t]$, the repeated root $\alpha$ must come from a field extension of degree $\da$, satisfying $\varphi(2n)/2<\da<\deg{u}$. So using results on the growth order of the Euler totient function \cite{Sandor2006} we show that if $\vn_n(t)$ has a repeated root, we must have $n\ll d\log\log{d},\text{ where }d=\deg{u}$. 
We then proceed to look at the degrees of the irreducible factors of $\vn_n(t)$ when $u,v,D\in\Qee[t]$, and obtain various Galois theoretic results.

\begin{thm}
Let $N$ be a positive integer, and define $$I(N):=\{P(t)\in\Qee[t],\text{ irreducible} : \deg{P}\leq N,\ P(t)\mid \vn_m(t)\text{ for some }m\}.$$ Then $\# I(N)\leq 10N\deg{u}$, for $N$ large enough.
\end{thm}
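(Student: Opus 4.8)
The plan is to translate the divisibility condition into a statement about roots of unity. Writing $\varepsilon(t)=u(t)+v(t)\sqrt{D(t)}$, the Pell relation $u^2-Dv^2=1$ gives $\varepsilon\cdot\bar\varepsilon=1$ with $\bar\varepsilon=u-v\sqrt{D}$, and from $u_n+v_n\sqrt{D}=\varepsilon^n$ one reads off $v_n=(\varepsilon^n-\bar\varepsilon^n)/(2\sqrt{D})$. Hence for $\alpha$ with $D(\alpha)\neq 0$ we have $v_n(\alpha)=0$ iff $\varepsilon(\alpha)^{2n}=1$; equivalently, $\alpha$ is a root of some $v_m$ precisely when $\varepsilon(\alpha)$ is a root of unity. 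The finitely many $\alpha$ with $D(\alpha)=0$ can be set aside as a bounded contribution. So I would first show: if an irreducible $P\in\Qee[t]$ divides $\vn_m(t)$ for some $m$, then (as $\vn_m\mid v_m$) every root $\alpha$ of $P$ satisfies $\varepsilon(\alpha)=\zeta$, a root of unity of some order $d$, and from $\varepsilon+\varepsilon^{-1}=2u$ we get $u(\alpha)=\tfrac12(\zeta+\zeta^{-1})=\cos(2\pi k/d)$.

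The central bookkeeping device is to attach to each such $P$ an \emph{order} $d=d_P$, the order of $\varepsilon(\alpha)$ for a root $\alpha$ of $P$. I would check this is well defined: since $u(\alpha)\in\Qee(\alpha)$ and its $\Qee$-conjugates are exactly the $\cos(2\pi k'/d)$ with $\gcd(k',d)=1$, the Galois group permutes the roots of $P$ among points whose $\varepsilon$-value again has order $d$. Moreover $\Qee(\cos(2\pi/d))\subseteq\Qee(\alpha)$, and for $d\geq 3$ the subfield has degree $\varphi(d)/2$ over $\Qee$, so $\tfrac12\varphi(d)$ divides $\deg P$; in particular $\deg P\leq N$ forces $\varphi(d)\leq 2N$. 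The orders $d\in\{1,2\}$ correspond to roots of $v_1=v$ and contribute at most $2\deg u$ factors, a harmless constant.

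The counting is then done one order at a time. The distinct values of $\cos(2\pi k/d)$ over primitive $d$-th roots of unity number $\varphi(d)/2$, and for each such value $c$ the equation $u(\alpha)=c$ has at most $\deg u$ roots; since $u(\alpha)=c$ already forces $\varepsilon(\alpha)\in\{\zeta,\zeta^{-1}\}$ to have order $d$, there are at most $\tfrac12\varphi(d)\deg u$ points $\alpha$ of order $d$. As the $P$ of order $d$ have pairwise disjoint root sets and each contributes at least $\tfrac12\varphi(d)$ of these roots, at most $\deg u$ irreducible polynomials have a given order $d$. Summing over admissible orders gives $\#I(N)\leq \deg u\cdot\#\{d:\varphi(d)\leq 2N\}$, and the estimates of \cite{Sandor2006} supply a linear bound $\#\{d:\varphi(d)\leq 2N\}\leq CN$; the sharp asymptotic has $C\to 2\,\zeta(2)\zeta(3)/\zeta(6)\approx 3.89$ (Riemann zeta values), comfortably below $10$, which forces $\#I(N)\leq 10N\deg u$ once $N$ is large. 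I expect the main obstacle to be making this per-order count rigorous—confirming that the order is genuinely Galois-stable so that ``the order of $P$'' is meaningful, and that the factor $\tfrac12\varphi(d)$ cancels cleanly between the root count $\tfrac12\varphi(d)\deg u$ and the degree lower bound $\deg P\geq\tfrac12\varphi(d)$—together with importing a sufficiently sharp linear bound on $\#\{d:\varphi(d)\leq 2N\}$ to pin down the constant $10$.
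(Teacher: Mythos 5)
Your proof is correct, and at its core it shares the paper's skeleton: a degree lower bound of the form $\tfrac12\varphi(\cdot)$ for every rational factor, an upper bound of (a multiple of) $\deg u$ factors per index, and the count of Euler-phi preimages from \cite{Sandor2006}. The bookkeeping, however, is genuinely different, and yours is sharper. The paper indexes by the solution index $n$: Lemma \ref{mindegree} together with Theorem \ref{modd} gives $\varphi(2n)/2\leq\deg P$ for any rational factor $P$ of $\vn_n(t)$, and Proposition \ref{general+} caps the number of factors at $\deg u$ ($n$ even) or $2\deg u$ ($n$ odd); since the odd $n$ are counted under the weaker constraint $\varphi(n)\leq 2N$ and carry the weight $2\deg u$, the paper's total is $2N\deg u+8N\deg u=10N\deg u$. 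You index instead by the order $d$ of $\varepsilon(\alpha)$, which assigns the two rational factors of $\psi_n$ ($n$ odd) to the \emph{distinct} orders $n$ and $2n$, and your pigeonhole --- at most $\tfrac12\varphi(d)\deg u$ points of order $d$, each irreducible $P$ of order $d$ owning at least $\tfrac12\varphi(d)$ of them --- yields a uniform bound of $\deg u$ polynomials per order. Summing over $\#\{d:\varphi(d)\leq 2N\}\leq(3.9+o(1))N$ gives roughly $4N\deg u$, strictly better than the paper's own derivation (both of course suffice for the stated $10N\deg u$). Your Galois-stability check of the order invariant is exactly the right thing to verify, and it re-proves in different language what the paper imports from Theorem \ref{modd} via the real cyclotomic subfield, so your argument is essentially self-contained where the paper leans on its earlier structure theory. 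One small tidy-up: the exceptional factors (orders $d\leq 2$, or factors with a root where $D(\alpha)=0$) all divide $v_1=v$, because for $m\geq 2$ any root of $\vn_m(t)$ has $u(\alpha)=\cos(\pi r/m)\neq\pm1$ and hence $D(\alpha)\neq 0$ automatically; so they number at most $\deg v<\deg u$ --- an additive term of size $O(\deg u)$ rather than a ``constant,'' but one that is harmlessly absorbed for every $N\geq 1$.
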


If we want a result that holds for all $N$, we can get $\# I(N)\leq 4N^2\deg{u}$.
Specialising to factors of certain degree, we show:
\begin{thm}
\leavevmode
\begin{enumerate}
	\item There are no linear polynomials with coefficients in $\Qee$ that divide $\vn_n(t)$, for $n\geq 4$.
	\item There are no quadratic polynomials with coefficients in $\Qee$ that divide $\vn_n(t)$, for $n\geq 7$. 
\end{enumerate}
\end{thm}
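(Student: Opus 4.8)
The plan is to convert divisibility of $\vn_n(t)$ by a low-degree rational polynomial into a statement about the degree over $\Qee$ of a value $\cos\frac{r\pi}{n}$, and then to bound that degree using the theory of the maximal real subfields of cyclotomic fields. First I would record the explicit shape of $\vn_n$. From $u_n+v_n\sqrtd=(u+v\sqrtd)^n$ together with $u^2-Dv^2=1$ one obtains $u_n=T_n(u)$ and $v_n=v\,U_{n-1}(u)$, where $T_n,U_{n-1}$ are the Chebyshev polynomials of the first and second kind. Since the roots of $U_{n-1}$ are exactly $\cos\frac{k\pi}{n}$ for $1\le k\le n-1$, the multiplicative factorisation $U_{n-1}=\prod_{d\mid n,\,d\ge 2}\Psi_d$ (where $\Psi_d$ collects the roots $\cos\frac{k\pi}{d}$ with $(k,d)=1$) combined with the Möbius formula $\vn_n=\prod_{m\mid n}v_m^{\mu(n/m)}$ shows that, up to a nonzero constant, $\vn_n(t)=\Psi_n\big(u(t)\big)$ for $n\ge 2$. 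In particular $\alpha$ is a root of $\vn_n$ if and only if $u(\alpha)=\cos\frac{r\pi}{n}$ for some $r$ with $1\le r<n$ and $(r,n)=1$, which is precisely the description already used in the discussion following the first theorem.

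Next I would set up the field-theoretic reduction. Suppose $P(t)\in\Qee[t]$ is irreducible of degree $\delta\in\{1,2\}$ with $P\mid\vn_n$, and let $\alpha$ be a root of $P$, so $[\Qee(\alpha):\Qee]=\delta$. Since $u\in\Qee[t]$, we have $\cos\frac{r\pi}{n}=u(\alpha)\in\Qee(\alpha)$, so by the tower law $D(n):=[\Qee(\cos\tfrac{r\pi}{n}):\Qee]$ divides $\delta$. Thus a rational linear factor forces $D(n)=1$, and an irreducible rational quadratic factor forces $D(n)\le 2$. A reducible rational quadratic is a product of rational linear factors and hence likewise forces $D(n)=1$, so for part (2) it suffices to treat the irreducible case.

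It then remains to compute $D(n)$ and read off the admissible $n$. Writing $2\cos\frac{r\pi}{n}=\zeta+\zeta^{-1}$ with $\zeta=e^{i r\pi/n}$, the field $\Qee(\cos\frac{r\pi}{n})$ is a maximal real subfield of a cyclotomic field; a short parity analysis (using $(r,n)=1$: for even $n$ the element $\zeta$ is a primitive $2n$-th root of unity, while for odd $n$ one has $\Qee(\zeta_{2n})=\Qee(\zeta_n)$) yields
\[
D(n)=\begin{cases}\varphi(n),& n\text{ even},\\[2pt]\varphi(n)/2,& n\text{ odd}.\end{cases}
\]
Using the classification of $m$ with $\varphi(m)\le 4$, one checks that $D(n)=1\iff n\in\{2,3\}$, that $D(n)\le 2\iff n\in\{2,3,4,5,6\}$, and hence that $D(n)\ge 3$ for every $n\ge 7$. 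Combined with the previous paragraph this gives no rational linear factor for $n\ge 4$ and no rational quadratic factor for $n\ge 7$, as claimed. For part (1) the chain $D(n)=1\iff\cos\frac{r\pi}{n}\in\Qee\iff n\in\{2,3\}$ is exactly Niven's theorem.

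The main obstacle is the exact evaluation of $D(n)$: the hypothesis $(r,n)=1$ does not force $(r,2n)=1$, so one cannot simply quote $[\Qee(\zeta_{2n})^{+}:\Qee]=\varphi(2n)/2$, and the even/odd split (together with $\varphi(2n)=2\varphi(n)$ for even $n$) must be carried out carefully to confirm that $D(n)$ depends only on $n$ and not on the particular admissible $r$. Once the formula for $D(n)$ is established, the finiteness of $\{m:\varphi(m)\le 4\}$ makes the determination of the admissible $n$ entirely mechanical, and the two bounds $n\le 3$ and $n\le 6$ fall out immediately.
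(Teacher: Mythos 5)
Your proposal is correct and follows essentially the same route as the paper: both reduce a rational factor of $\vn_n(t)=\psi_n(u(t))$ to a bound on $[\Qee(\cos\frac{r\pi}{n}):\Qee]=\varphi(2n)/2$ via the tower law (the paper packages this as Lemma \ref{mindegree} plus the irreducibility statement of Theorem \ref{modd}), and then both conclude by the same Euler-totient case analysis on the parity of $n$. Your Chebyshev-polynomial derivation of $\vn_n=\psi_n(u)$ and your inlined cyclotomic real-subfield computation are only cosmetic repackagings of facts the paper establishes in Section \ref{multisol}.
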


Both of these results are best possible. To see this, we present the following example: 
\begin{eg}
	Let $D(t)=t^2-1$, then its Pell's equation has a fundamental solution $(t,1)$. The only linear factors of $v_n(t)$, are $\vn_2=2t$, $\vn_3=2t\pm 1$. For $n\geq 4$, there are no linear factors over the rational numbers. Furthermore, the only quadratic irreducible factors are $\vn_4=2t^2-1$, $\vn_5=4t^2\pm 2t-1$ and $\vn_6=4t^2-3$. For $n\geq 7$, $\vn_n$ has no irreducible quadratic factors in $\Qee[t]$. We will give examples for all possibilities in section \ref{factgivendegree}.
\end{eg}
Moreover, we show that for a repeated root of $\vn_n(t)$ to be of a prime degree $p$, then $p$ lies in a subset of the primes that has density $0$.  
\begin{thm}
Suppose $D(t)\in\Qee[t]$, which has Pell's equation with fundamental solution $(u,v)$. If the polynomials $\vn_n(t)$ for $n>3$ have a repeated root $\alpha$ of an odd prime degree $\da$, then either $n=2\da+1$ is also prime, or $n=9$ in which case $\alpha$ is cubic.
\end{thm}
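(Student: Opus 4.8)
The plan is to combine the characterisation of the roots of $\vn_n(t)$ recorded after Theorem~1 with the arithmetic of cyclotomic fields. By that characterisation, a repeated root $\alpha$ of $\vn_n(t)$ satisfies $u(\alpha)=\cos\frac{\pi r}{n}$ for some $r$ with $1\leq r<n$ and $(r,n)=1$ (the condition under which $(t-\alpha)$, and together with $(t-\alpha)\mid u^{\prime}$ repeatedly, divides $\vn_n$). Since $u(t)\in\Qee[t]$, the value $u(\alpha)$ lies in $\Qee(\alpha)$, so $\Qee\big(u(\alpha)\big)\subseteq\Qee(\alpha)$ and hence $[\Qee(u(\alpha)):\Qee]$ divides $[\Qee(\alpha):\Qee]=\da=p$. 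As $p$ is prime, this degree is either $1$ or $p$.

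First I would rule out the degree $1$ case. By Niven's theorem the only rational values of $\cos\frac{\pi r}{n}$ are $0,\pm\tfrac12,\pm1$, and a direct check shows that, under $1\leq r<n$ and $(r,n)=1$, each of these forces $n\leq 3$; since $n>3$ the value $u(\alpha)$ is irrational, so $[\Qee(u(\alpha)):\Qee]=p$. Next I would pass to roots of unity: writing $\xi=e^{i\pi r/n}$ we have $2u(\alpha)=\xi+\xi^{-1}$, and $\Qee(u(\alpha))=\Qee(\xi+\xi^{-1})$ is the maximal real subfield of $\Qee(\xi)$. If $M$ denotes the order of $\xi$, then $[\Qee(u(\alpha)):\Qee]=\varphi(M)/2$, whence $\varphi(M)=2p$. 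Because $(r,n)=1$, the order $M$ is forced by the parity of $r$: if $r$ is odd then $\xi$ is a primitive $2n$-th root of unity and $M=2n$, while if $r$ is even then $n$ must be odd (otherwise $2\mid(r,n)$) and $M=n$.

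It then remains to solve $\varphi(M)=2p$ for an odd prime $p$. Since every odd prime dividing $M$ contributes an even factor to $\varphi(M)$ while $2p$ carries a single factor of $2$, the integer $M$ must be $2^{a}$ times a single prime power; running through the cases leaves only $M\in\{2p+1,\,2(2p+1)\}$ when $2p+1$ is prime, together with the sporadic $M\in\{9,18\}$ when $p=3$. Finally I would translate each admissible $M$ back through the two possibilities $M=2n$ (for $r$ odd) and $M=n$ with $n$ odd (for $r$ even). The constraint $(r,n)=1$ discards the spurious even values: for instance $n=4p+2$ cannot occur, since matching it to $M=n$ would need $r$ even while $n$ is even. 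Every surviving case gives either $n=2\da+1$ with $2\da+1$ prime, or $p=3$ together with $n=9$, in which case $\alpha$ is cubic.

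The main obstacle is the bookkeeping of this last step: pinning down $M$ exactly from $\varphi(M)=2p$, and then tracking the parities of $r$ and $n$ so that no extraneous solution (such as $n=4p+2$, or $n=2p+1$ with $2p+1$ composite) slips through. The cyclotomic degree computation $[\Qee(\xi+\xi^{-1}):\Qee]=\varphi(M)/2$ and the identification $M=2n$ or $M=n$ from $(r,n)=1$ are the two ingredients that convert the purely field-theoretic constraint $\varphi(M)=2p$ into the sharp statement about $n$.
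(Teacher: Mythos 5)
Your proof is correct and is essentially the paper's own argument: both identify $u(\alpha)=\cos\frac{\pi r}{n}$ as a generator of the maximal real subfield of a cyclotomic field, use primality of $\da$ together with $n>3$ to force $[\Qee(\cos\frac{\pi r}{n}):\Qee]=\da$, and finish by solving a totient equation. The only divergence is bookkeeping: the paper invokes the uniform degree formula $\varphi(2n)/2$ and its two totient lemmas (Lemmas \ref{l1} and \ref{l2}), whereas you compute the exact order $M\in\{n,2n\}$ of $e^{i\pi r/n}$, solve $\varphi(M)=2\da$ directly, and filter by the parities of $r$ and $n$ --- the same underlying parity-of-$\varphi$ observation.
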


Restricting our investigation to polynomials with integer coefficients:
\begin{cor}
	For $u, v, D\in\Zee[t]$, the polynomials $\vn_n(t)$ for $n>3$ have no repeated factors that are quadratic polynomials with integer coefficients.
\end{cor}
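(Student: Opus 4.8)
The plan is to pin down the finitely many $n$ that can occur and then extract a contradiction from an integrality argument at the prime $2$. Suppose, for contradiction, that $P(t)\in\Zee[t]$ is quadratic with $P(t)^2\mid\vn_n(t)$ for some $n>3$. Since there are no linear factors over $\Qee$ of $\vn_n(t)$ when $n\geq 4$, the polynomial $P$ is irreducible over $\Qee$, so its roots $\alpha,\bar\alpha$ have degree $\da=2$. By the repeated-root criterion, $\alpha$ is a root of $u'(t)$ and $u(\alpha)=\cos(k\pi/n)$ for some $k$ with $\gcd(k,n)=1$. Because $u(\alpha)$ is an irrational number lying in $\Qee(\alpha)$, it has degree exactly $2$ over $\Qee$; a short computation with the totient function (the degree of $\cos(k\pi/n)$ being $\varphi(2n)/2$ or $\varphi(n)/2$) shows that for $n>3$ this forces $n\in\{4,5,6\}$, with $\Qee(\alpha)=\Qee(\sqrt2),\,\Qee(\sqrt5),\,\Qee(\sqrt3)$ respectively.

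Next I would isolate the arithmetic of the value $u(\alpha)$. In each of the three cases $2u(\alpha)=2\cos(k\pi/n)=\zeta_{2n}^{k}+\zeta_{2n}^{-k}$ is an algebraic integer, whereas $u(\alpha)=\cos(k\pi/n)$ is not: its monic minimal polynomial is respectively $t^2-\tfrac12$, $t^2\mp\tfrac12 t-\tfrac14$, $t^2-\tfrac34$, and in each case $u(\alpha)$ has strictly negative valuation at a prime $\mathfrak p$ of $\Qee(\alpha)$ above $2$, while remaining integral at every prime away from $2$. Since $u\in\Zee[t]$ has coefficients that are integral at every prime, the ultrametric inequality forces $w_{\mathfrak p}(u(\alpha))\geq 0$ whenever $w_{\mathfrak p}(\alpha)\geq 0$. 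As $w_{\mathfrak p}(u(\alpha))<0$, I conclude $w_{\mathfrak p}(\alpha)<0$: the repeated root $\alpha$ is \emph{not} integral at $\mathfrak p$.

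The third step is to confront this with $u'(\alpha)=0$. If $\mathfrak p$ did not divide the leading coefficient $(\deg u)\,\ell_u$ of $u'$, then in the equation $u'(\alpha)=0$ the top term $(\deg u)\ell_u\,\alpha^{\deg u-1}$ would have strictly smallest $\mathfrak p$-valuation $(\deg u-1)\,w_{\mathfrak p}(\alpha)$ and could not be cancelled by the lower terms, a contradiction. Hence a negative-valuation root of $u'$ can exist only when $2\mid(\deg u)\,\ell_u$. It therefore remains to rule out $2\mid(\deg u)\,\ell_u$, and this is the heart of the matter: the clean integrality argument disposes of every prime except those above $2$, and $2$ is unavoidable precisely because all three exceptional values $\cos(k\pi/n)$ have denominator a power of $2$.

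The main obstacle is this last, purely $2$-adic, step. To close it I would use the Pell relation itself: comparing leading coefficients in $u^2-Dv^2=1$ gives $\ell_u^2=\ell_D\,\ell_v^2$, and I would reduce the identity modulo $2$, combining it with the facts that $\vn_n\in\Zee[t]$ carries the double factor $P^2$ and that the trace and norm $u(\alpha)+u(\bar\alpha)$ and $u(\alpha)u(\bar\alpha)$ are the half-integers computed above, aiming to force $\ell_u$ and $\deg u$ to be odd and so contradict $2\mid(\deg u)\ell_u$. That the obstruction genuinely lives at $2$, and that the corollary fails without the hypothesis $D\in\Zee[t]$, is visible in the rational example $u(t)=-\tfrac18 t^3+\tfrac34 t$, for which $u'(\pm\sqrt2)=0$ and $u(\pm\sqrt2)=\pm\tfrac{\sqrt2}{2}=\cos\tfrac{\pi}{4},\cos\tfrac{3\pi}{4}$, so that $(t^2-2)^2\mid\vn_4(t)$ while $D=u^2-1\notin\Zee[t]$.
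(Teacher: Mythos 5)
Your first three steps are correct and, in fact, sharper than they need to be; the problem is that your fourth step is only a plan (``I would \dots aiming to force $\ell_u$ and $\deg u$ to be odd''), and that plan cannot be carried out, because the statement in the generality you are attacking it --- allowing \emph{non-monic} integer quadratic factors --- is false. Take $u(t)=-2t^5+t^3+t\in\Zee[t]$, $v(t)=1$, $D(t)=u(t)^2-1\in\Zee[t]$. Then $(u,1)$ is the fundamental solution (any solution with constant $v$ is $(\pm u,\pm 1)$, since $(u_0-cu)(u_0+cu)=1-c^2$ forces one factor to vanish), $D$ is even square-free, and a direct check gives $u(\pm\tfrac{\sqrt2}{2})=\pm\tfrac{\sqrt2}{2}=\cos\tfrac{\pi}{4},\cos\tfrac{3\pi}{4}$ together with $u'(\pm\tfrac{\sqrt2}{2})=0$. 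Hence $(t^2-\tfrac12)^2$ divides $u(t)^2-\tfrac12$, i.e.\ the integer quadratic $2t^2-1$ is a \emph{repeated} factor of $\vn_4(t)=4u(t)^2-2$. This is exactly the mirror image of your own closing example: there you kept $\alpha=\sqrt2$ integral and let $u$ leave $\Zee[t]$; here $u$ stays in $\Zee[t]$ and the root $\alpha=\tfrac{\sqrt2}{2}$ is the thing that is non-integral at $2$. Note $\ell_u=-2$, $\deg u=5$, so $(\deg u)\ell_u$ is even: the case your third step leaves open genuinely occurs, and no amount of reduction of $u^2-Dv^2=1$ modulo $2$ can exclude it.

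What the paper actually proves is the \emph{monic} version of the corollary: its proof runs through the preceding proposition, whose hypothesis is that $\alpha$ is an algebraic \emph{integer} (equivalently, the repeated quadratic factor is monic in $\Zee[t]$); only then is the remainder $w(t)$ of $u$ upon division by $p_{\alpha}^k$ in $\Zee[t]$, whence $a_k\in\Zee$, which is then contradicted by the explicit formula for $a_k$ and Kummer's theorem on the $2$-adic valuation of $\binom{2k-2}{k-1}$. Under that same monic reading, your argument not only works but terminates at your second step, more cleanly than the paper's: if $\alpha$ is a quadratic algebraic integer and $u\in\Zee[t]$, then $u(\alpha)$ is an algebraic integer, whereas $\cos(k\pi/n)$ for $n=4,5,6$ is not (its monic minimal polynomial is $t^2-\tfrac12$, $t^2\mp\tfrac12t-\tfrac14$, or $t^2-\tfrac34$) --- an immediate contradiction, with no leading-coefficient analysis, no valuation of $\alpha$, and none of the paper's machinery. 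So the repair is to restrict the claim to monic factors (algebraic-integer roots), keep your steps one and two, and delete steps three and four; as written, the proposal is incomplete, and its final goal is unattainable.
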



Solving the polynomial Pell's equation is not completely analogous to the classical case: for instance, there are certain $D(t)\in\Cee[t]$ with corresponding Pell's equation that has no non-trivial solutions. This is obvious when $D(t)$ has odd degree, since the term of highest degree cannot be cancelled. Therefore, we fix $D(t)$ to be a polynomial of degree $2d$ and look for solutions to \eqref{nPell} amongst polynomials with complex coefficients. However, there are examples when $D(t)$ has even degree, for instance $D(t)=t^4+t+1$, where it is less obvious why the corresponding Pell's equation is not solvable. We will call polynomials $D(t)$ for which \eqref{nPell} has a non-trivial solution \emph{Pellian}. 
We can use our understanding of the factors of $v_n(t)$, for a given Pellian polynomial $D(t)$, together with the following lemma, to construct new Pellian polynomials.
\begin{lemma}
	 The polynomial $F^2D(t)$ is Pellian if and only if $D(t)$ is a Pellian polynomial with solutions $(u_n(t),v_n(t))_{n\in\Zee}$, and $F(t)\mid v_n(t)$, for some $n$.
\end{lemma}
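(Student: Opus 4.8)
The plan is to reduce everything to the single algebraic identity $x^2-(F^2D)\,y^2 = x^2-D\,(Fy)^2$, which converts solutions of Pell's equation for $F^2D$ into solutions for $D$ and conversely, the correspondence on the $y$-component being multiplication by $F$. Throughout I assume $F\neq 0$, so that this correspondence preserves non-triviality, i.e.\ $y\neq 0\iff Fy\neq 0$.

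For the forward implication, suppose $F^2D$ is Pellian, so there is a pair $(X,Y)\in\Cee[t]^2$ with $Y\neq 0$ and $X^2-F^2D\,Y^2=1$. The identity shows $X^2-D\,(FY)^2=1$, so $(X,FY)$ is a non-trivial solution of Pell's equation for $D$; in particular $D$ is Pellian. Now I invoke the complete parametrization of the solutions: every solution of $D$'s Pell equation has the form $\pm(u_n(t),v_n(t))$ for some $n\in\Zee$, obtained from the fundamental solution via \eqref{solgen}. Matching $y$-components gives $FY=\pm v_n$ for some $n$, and since $FY\neq 0$ this $n$ is non-zero; hence $F\mid v_n$, as required.

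For the converse, suppose $D$ is Pellian and $F\mid v_n$ for some $n$. Write $v_n=F\,w$ with $w\in\Cee[t]$; since $v_n\neq 0$ we have $w\neq 0$. Substituting into the Pell relation $u_n^2-D\,v_n^2=1$ and applying the identity backwards yields $u_n^2-(F^2D)\,w^2=1$. Thus $(u_n,w)$ is a non-trivial solution of Pell's equation for $F^2D$, so $F^2D$ is Pellian.

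The only genuine input beyond routine substitution is the completeness of the solution list used in the forward direction, namely that the $\pm(u_n,v_n)$ exhaust all solutions of $D$'s Pell equation; this is the polynomial analogue of the classical fact and is taken as established earlier. The remaining care is bookkeeping: tracking signs (divisibility is insensitive to them), excluding the degenerate case $F=0$, and noting that $\deg(F^2D)=2\deg F+\deg D$ stays even, so that $F^2D$ is a legitimate candidate for being Pellian. I expect no substantial obstacle here; the main subtlety is simply ensuring that non-triviality is preserved in both directions.
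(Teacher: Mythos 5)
Your proof is correct and takes essentially the same route as the paper: both rest on the identity $x^2-(F^2D)y^2 = x^2-D(Fy)^2$ to pass between the two Pell equations, plus the completeness of the solution list $\pm(u_n,v_n)$ for $D$ (the paper phrases the forward direction as a proof by contradiction, but the underlying argument is identical to your direct one). No gap; your version is, if anything, written more carefully about non-triviality and signs than the paper's.
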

Furthermore, recall that if we restrict $D(t), \ u(t)\text{ and } v(t)\in\Qee[t]$, then there are only finitely many factors $F(t)\in\Qee[t]$ of $v_n(t)$, of a given degree. Therefore, there are infinitely many families of infinitely many polynomials of the form $F^2D(t)$ that are not Pellian. This contrasts with the situation over the integers, where for any non-square positive integer of the form $F^2D$, Pell's equation is solvable.


Organisation of this paper: In section \ref{multisol} we develop the factorisation of $v_n(t)$ as a product of old and new factors. We also, show that for $n>1$, we can write $\vn_n(t)$ as an integral polynomial in $u$. In section \ref{2} we study the bounds on the number of factors and repeated factors of $\vn_n(t)$. Firstly, for the factors of $\vn_n(t)$, when $D\in\Qee[t]$, we show there are different bounds on the maximum, depending on the parity of $n$. We next prove that for $D\in\Cee[t]$ there are no repeated roots of $\vn_n(t)$ for $n$ large enough. Furthermore, if we restrict to the rational polynomials, we compute an asymptotic on $n$, beyond which $\vn_n(t)$ could not  have repeated factors. In section \ref{3} we turn our attention to the degrees of the factors and repeated factors of $\vn_n(t)$. We prove a bound on the number of irreducible polynomials up to a degree $N$, that divide any $\vn_n(t)$. Furthermore, using Galois theoretic methods, we discard $\alpha\in\Cee$ of  degree equal to certain primes, as being possible repeated root. Finally we study in more detail the occurrence of quadratic irrationals as repeated roots. In the appendix, we show an application of the finiteness results on the factors of $\vn_n(t)$ to the construction of non square-free Pellian polynomials.

\section{Factorisation properties of $v_n(t)$}\label{multisol}

Let $D\in\Cee[t]$ be a Pellian polynomial with \emph{fundamental solution} $(u,v)$, with $v\neq 0$ of smallest degree. Then, for each integer $n$ greater than $1$, we obtain a new pair of polynomials $(u_n,v_n)$, satisfying the same Pell's equation, by 
\[
u_n +v_n\sqrt{D} = ( u+v\sqrt{D} )^n.
\]
Furthermore, we can extend this definition to the negative integers by applying the identity $(u+v\sqrt{D})^{-1}=u-v\sqrt{D}$. This equivalence also gives 
\[
u_n-v_n\sqrt{D} = ( u-v\sqrt{D} )^n,
\]
so that
\[
v_n  = \frac{ ( u+v\sqrt{D} )^n-( u-v\sqrt{D} )^n} {2\sqrt{D}} .
\]
By definition, $v_1=v$ and, as $a^n-b^n=\prod_{\xi:\ \xi^n=1} (a-b\xi)$, we have
\[
v_n = v_1 \prod_{\substack{\xi:\ \xi^n=1,\\ \xi\ne 1}} \left(( u+v\sqrt{D} )-\xi ( u-v\sqrt{D} )\right) .
\]
Note that $\xi$ is a root of $t^n-1$, which can be written as a product of irreducible factors as $\prod_{d|n} \phi_d(t)$, where $\phi_d(t)$ denotes the cyclotomic polynomials. Therefore
\begin{align}\label{vfactor}
v_n=v_1\prod_{m|n, m>1} \psi_m,
\end{align}
where
\begin{align}\label{psim}
\psi_m := \prod_{\xi:\  \phi_m(\xi)=0} \left(( u+v\sqrt{D} )-\xi ( u-v\sqrt{D} )\right).
\end{align}



We can exploit that the product in (\ref{psim}) is taken over the roots of the cyclotomic polynomials, to show that $\psi_m$ does not depend on $v$ or $\sqrt{D}$, but rather:
\begin{lemma}\label{integralpoly}
	For all integers $m$ greater than $1$, we have $\psi_m\in\Zee[2u]$.
\end{lemma}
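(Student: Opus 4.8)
The plan is to exploit the two algebraic relations satisfied by the conjugate quantities $a := u + v\sqrtd$ and $b := u - v\sqrtd$. First I would record that $a + b = 2u$ and, crucially, that $ab = u^2 - Dv^2 = 1$ by the very fact that $(u,v)$ solves Pell's equation. Thus $a$ and $b$ are the two roots of the monic quadratic $z^2 - (2u)z + 1$, and every symmetric expression in $a,b$ will be expressible through $2u$ alone.

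Next I would rewrite the defining product in terms of the cyclotomic polynomial itself. Since $\phi_m(t) = \prod_{\phi_m(\xi)=0}(t - \xi)$, homogenising gives $\psi_m = \prod_{\xi} (a - \xi b) = b^{\varphi(m)}\phi_m(a/b)$, so that, writing $\phi_m(t) = \sum_{j=0}^{\varphi(m)} c_j t^j$ with $c_j \in \Zee$, we obtain $\psi_m = \sum_{j=0}^{\varphi(m)} c_j\, a^j b^{\varphi(m)-j}$. The key structural input is that for every $m > 1$ the cyclotomic polynomial is palindromic, $c_j = c_{\varphi(m)-j}$, because the set of primitive $m$-th roots of unity is closed under $\xi \mapsto \xi^{-1}$. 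Pairing the $j$-th and $(\varphi(m)-j)$-th terms and using $ab = 1$ collapses each pair to $c_j\bigl(a^{\varphi(m)-2j} + b^{\varphi(m)-2j}\bigr)$, with a single leftover constant $c_{\varphi(m)/2}$ when $\varphi(m)$ is even.

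It then remains to observe that the power sums $s_k := a^k + b^k$ lie in $\Zee[2u]$: from $s_0 = 2$, $s_1 = 2u$ and the recurrence $s_k = (2u)\,s_{k-1} - s_{k-2}$ (which holds because $a,b$ are the roots of $z^2 - (2u)z + 1$), an immediate induction places every $s_k$ in $\Zee[2u]$. Substituting back expresses $\psi_m$ as an integer linear combination of the $s_k$ plus an integer constant, giving $\psi_m \in \Zee[2u]$ as required. I expect the main point requiring care to be the palindromy of $\phi_m$ together with the bookkeeping of the pairing --- in particular the middle term and the small case $m = 2$, where $\varphi(m)$ is odd --- rather than any deep difficulty; once $ab = 1$ and the reciprocal symmetry of $\phi_m$ are in hand, the reduction to power sums is essentially forced.
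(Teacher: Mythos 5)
Your proof is correct, but it takes a genuinely different route from the paper's. The paper pairs each root $\xi$ of $\phi_m$ with its complex conjugate $\overline{\xi}=\xi^{-1}$ and computes the resulting quadratic factor $\left((u+v\sqrt{D})-\xi(u-v\sqrt{D})\right)\left((u+v\sqrt{D})-\overline{\xi}(u-v\sqrt{D})\right)=(2u)^2-(\xi+2+\overline{\xi})$, then concludes integrality because the numbers $\xi+2+\overline{\xi}$ form a Galois-stable set of algebraic integers, so the product of these quadratics has coefficients in $\Zee$; this multiplicative pairing is also exactly what feeds into the next lemma, where the roots of $\psi_m$ are identified as $\cos(r\pi/m)$. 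You instead work additively: you expand $\psi_m=b^{\varphi(m)}\phi_m(a/b)=\sum_j c_j a^j b^{\varphi(m)-j}$, invoke palindromy of $\phi_m$ for $m>1$ together with $ab=1$ to collapse paired terms into power sums $s_k=a^k+b^k$, and then use the recurrence $s_k=(2u)s_{k-1}-s_{k-2}$ to place everything in $\Zee[2u]$. Both arguments ultimately rest on the closure of the primitive $m$-th roots of unity under inversion, but yours is more elementary and self-contained (no appeal to symmetric functions of conjugate algebraic integers), handles $m=2$ on the same footing as $m>2$ rather than as a separate case, and exhibits $\psi_m$ explicitly as an integer combination of the Chebyshev-like power sums $s_k$; the paper's version is shorter and yields the slightly finer fact that $\psi_m\in\Zee[(2u)^2]$ for $m>2$, with the conjugate pairing doing double duty for the subsequent factorisation of $\psi_m$ over $\Cee[u]$.
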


\begin{proof}
	
Firstly, note that $\phi_2(t)=t+1$ and so $\psi_2=2u$. Now assume that $m>2$, then $\phi_m$ is always of even degree and thus the $\xi$'s come in conjugate pairs. We will study $\psi_m$ by pairing up the terms for $\xi$ and $\overline{\xi}$.
Together we have
\[
\left(( u+v\sqrt{D} )-\xi ( u-v\sqrt{D} )\right) \left(( u+v\sqrt{D} )-\overline{\xi}( u-v\sqrt{D} )\right) =
(2u)^2-(\xi+2+\overline{\xi}).  
\]
This implies that the product in $\psi_m$ is a product over conjugates and therefore belongs to $\Zee[2u]$. 
\end{proof}

To better understand the factors of $v_n$ it suffices to factorise $\psi_m$ over $\Cee[u]$
\begin{lemma}\label{vn}
	The polynomials $\psi_m(u)$ have roots $\cos\left(\frac{r\pi }{m}\right)$, for $1\leq r< m$ and $(r,m)=1$. Namely, 
	\begin{align*}
	\psi_m(u)=2^{\varphi(m)}\prod_{\substack{1\leq r< m\\ (r,m)=1}}\left(u-\cos\frac{r\pi}{m}\right).
\end{align*}
\end{lemma}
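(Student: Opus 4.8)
The plan is to start from the product form \eqref{psim} of $\psi_m$ and feed in the Pell relation $u^2-Dv^2=1$, which says precisely that $(u+v\sqrt{D})(u-v\sqrt{D})=1$. Writing $a=u+v\sqrt{D}$ and $b=u-v\sqrt{D}$, so that $ab=1$ and $a+b=2u$, each factor of \eqref{psim} is $a-\xi b$. Lemma \ref{integralpoly} has already established that $\psi_m$ is a genuine polynomial in $u$ alone, so I am free to reuse the conjugate-pairing from its proof to clear away every occurrence of $v\sqrt{D}$.

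First I would dispose of $m=2$ by the direct evaluation $\psi_2=a+b=2u$, which matches the claimed formula $2^{\varphi(2)}\bigl(u-\cos\tfrac{\pi}{2}\bigr)=2u$. For $m>2$ the cyclotomic polynomial $\phi_m$ has even degree and no real roots, so its roots $\xi=e^{2\pi i r/m}$ with $(r,m)=1$ split into genuine conjugate pairs $\{\xi,\overline{\xi}\}$. For each pair I combine the two corresponding factors: using $ab=1$, $\xi\overline{\xi}=1$ and $a+b=2u$, the product $(a-\xi b)(a-\overline{\xi}b)$ collapses to $4u^2-2-(\xi+\overline{\xi})$, exactly the quadratic appearing in the proof of Lemma \ref{integralpoly}.

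The core of the argument is then a short trigonometric reduction. Since $\xi=e^{2\pi i r/m}$ gives $\xi+\overline{\xi}=2\cos\frac{2\pi r}{m}$, the half-angle identity $2+2\cos\theta=4\cos^2(\theta/2)$ converts each quadratic into
\[
4u^2-2-2\cos\tfrac{2\pi r}{m}=4\Bigl(u^2-\cos^2\tfrac{\pi r}{m}\Bigr)=4\Bigl(u-\cos\tfrac{\pi r}{m}\Bigr)\Bigl(u+\cos\tfrac{\pi r}{m}\Bigr).
\]
Because $-\cos\frac{\pi r}{m}=\cos\frac{\pi(m-r)}{m}$, this one conjugate pair supplies precisely the two linear factors indexed by $r$ and $m-r$. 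Letting $\xi$ run over a single representative of each of the $\varphi(m)/2$ pairs, the indices $r$ and $m-r$ together sweep out all residues coprime to $m$ in $(0,m)$, so the total product reproduces the $\varphi(m)$ linear factors of the statement, with accumulated leading coefficient $4^{\varphi(m)/2}=2^{\varphi(m)}$.

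I expect the only point needing care to be the bookkeeping that the conjugate pairs $\{\xi,\overline{\xi}\}$ correspond bijectively to the index pairs $\{r,m-r\}$, so that no linear factor is omitted or counted twice; the trig identity and the leading-coefficient tally are then routine. As a sanity check I would also note that the values $\cos\frac{\pi r}{m}$ for $0<r<m$ are pairwise distinct, since cosine is injective on $(0,\pi)$, confirming that $\psi_m$ factors into $\varphi(m)$ simple linear factors as claimed.
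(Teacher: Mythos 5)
Your proposal is correct and follows essentially the same route as the paper's own proof: pairing the factors for $\xi$ and $\overline{\xi}$ via the Pell relation to get $(2u)^2-(\xi+2+\overline{\xi})$, applying the double-angle identity to write this as $(2u)^2-4\cos^2\frac{r\pi}{m}$, and then using $\cos\frac{r\pi}{m}=-\cos\frac{(m-r)\pi}{m}$ to re-index the resulting linear factors over all $1\leq r<m$ coprime to $m$. Your explicit treatment of $m=2$ and the remark on distinctness of the cosines are minor additions beyond the paper's argument, but the substance is identical.
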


\begin{proof}
The roots of $\phi_m$ are $e( \frac rm)$ for $1\leq r<m$ with $(r,m)=1$, and for each conjugate pair we take $\xi = e( \frac rm)$ with $1\leq r<m/2$. Here $e(a):=e^{2a\pi i}$ and using Euler's formula $e(a)=\cos{2a}+i\sin{2a}$, we have 
\begin{align*}
\xi+2+\overline{\xi}=e\left( \frac rm\right)+2+e\left( -\frac rm\right)=2\cos{\frac{2r\pi}{m}}+2=4\cos^2{\frac{r\pi}{m}}.
\end{align*}
The latter equality comes from the double angle formula. Consequently, 
\[
\psi_m=\prod_{\substack{1\leq r< m/2\\ (r,m)=1}} \left((2u)^2-\left(2\cos{\frac{r\pi}{m}}\right)^2\right) .
\]
Now $\cos{\frac{r\pi}m}=-\cos{\frac {(m-r)\pi}{m}}$, thus 

\[
\psi_m=\prod_{\substack{1\leq r< m/2\\ (r,m)=1}} \left(2u-2\cos{\frac{r\pi}{m}}\right)\left(2u-2\cos{\frac{(m-r)\pi}{m}}\right).
\]

Observe that $m/2<s< m$ with $(s,m)=1$ if and only if $s=m-r$, where $0<r\leq m/2$ with $(r,m)=1$, and so the above becomes
\[
\psi_m=\prod_{\substack{1\leq r< m\\ (r,m)=1}} \left(2u-2\cos{\frac{r\pi}{m}}\right)
=2^{\varphi(m)} \prod_{\substack{1\leq r< m\\ (r,m)=1}} \left(u-\cos\left(\frac{ r\pi }{m}\right)\right).
\]
\end{proof}
Furthermore, since the cosines are distinct, as $m$ ranges in the natural numbers and $r$ in the given interval, we have 
\begin{lemma}\label{copr}
The polynomials $\psi_m$ for $m>1$, have no common roots.
\end{lemma}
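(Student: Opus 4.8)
The plan is to reduce the statement to the claim announced just before the lemma, namely that the cosine values appearing in Lemma~\ref{vn} are pairwise distinct. By Lemma~\ref{vn}, the root set of $\psi_m$ is exactly
\[
S_m := \left\{\cos\frac{r\pi}{m} : 1\le r< m,\ (r,m)=1\right\},
\]
so proving that $\psi_{m}$ and $\psi_{m'}$ share no root for distinct $m,m'>1$ is the same as showing $S_m\cap S_{m'}=\varnothing$. This follows at once once I check that the map $(m,r)\mapsto \cos\frac{r\pi}{m}$ is injective on the set of admissible pairs (and in fact such injectivity also records that the roots within a single $\psi_m$ are distinct, as the statement of Lemma~\ref{vn} already presumes).

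So I would suppose $\cos\frac{r\pi}{m}=\cos\frac{s\pi}{m'}$ for admissible pairs $(m,r)$ and $(m',s)$, and invoke the standard characterisation $\cos x=\cos y \iff x=\pm y+2\pi k$ for some $k\in\Zee$. Writing $x=\pi r/m$, $y=\pi s/m'$ and dividing through by $\pi$ turns this into $\frac rm=\pm\frac s{m'}+2k$, and the two signs give the two cases I must dispose of.

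In the ``$-$'' case I get $\frac rm+\frac s{m'}=2k$; since both fractions lie strictly in $(0,1)$, their sum lies strictly in $(0,2)$, an interval containing no even integer, so this case is vacuous. In the ``$+$'' case I get $\frac rm-\frac s{m'}=2k$ with the difference lying in $(-1,1)$, forcing $k=0$ and hence $\frac rm=\frac s{m'}$. Because $(r,m)=1$ and $(s,m')=1$, equality of these two fractions already in lowest terms yields $m=m'$ and $r=s$. Thus the admissible pairs coincide, the cosine map is injective, and distinct $\psi_m$ have disjoint root sets.

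I do not expect any genuine obstacle here: the only point requiring care is tracking the open intervals $(0,1)$ and $(0,2)$ in the two sign cases, so as to rule out the reflection $\frac rm=-\frac s{m'}$ modulo~$2$; the coprimality hypothesis then does the rest. The argument is entirely self-contained, relying only on Lemma~\ref{vn} and elementary properties of the cosine.
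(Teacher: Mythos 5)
Your proof is correct and takes essentially the same route as the paper, which offers no detailed argument but simply asserts (in the sentence preceding the lemma) that the cosines $\cos\frac{r\pi}{m}$ are distinct as $m$ and $r$ range over admissible pairs; your write-up supplies exactly that distinctness claim, with the coprimality condition doing the real work. The only simplification available is that all angles $\frac{r\pi}{m}$ lie in $(0,\pi)$, where cosine is strictly decreasing, so injectivity is immediate and the $\pm$ case analysis from $\cos x=\cos y\iff x=\pm y+2k\pi$ can be skipped; either way the argument is sound.
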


If we instead we restrict ourselves to working over polynomials in $u$ with coefficients in $\Qee$, we obtain:
\begin{thm}\label{modd}
	The polynomials $\psi_m$ are irreducible over $\Qee[u]$ for $m$ even and split into two irreducible factors of degree $\varphi(m)/2$, if $m$ is odd. Namely, for odd integers $m>1$
	\begin{align*}
	 	\psi_m(u)=(-1)^{\varphi(m)/2}\psi^*_m(u)\psi^*_m(-u),
	 \end{align*} 
	 where 
	 \begin{align*}
	 	\psi_m^*(u)=2^{\varphi(m)}\prod_{\substack{1\leq q< m/2\\ (q,m)=1}}\left(u-\cos\frac{2q\pi}{m}\right).
	 \end{align*}
\end{thm}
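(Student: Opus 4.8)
The plan is to realise the roots of $\psi_m$ inside a cyclotomic field and to read off the factorisation from the action of its Galois group. Writing $\zeta_{2m}=e^{\pi i/m}$, Lemma \ref{vn} gives the roots of $\psi_m$ as $\cos\frac{r\pi}{m}=\tfrac12(\zeta_{2m}^{r}+\zeta_{2m}^{-r})$ for $1\le r<m$, $(r,m)=1$. I would first record the standard fact that the maximal real subfield $\Qee(\zeta_N+\zeta_N^{-1})$ of $\Qee(\zeta_N)$ has degree $\varphi(N)/2$ over $\Qee$ for $N>2$, so that $\zeta_N+\zeta_N^{-1}=2\cos\frac{2\pi}{N}$ has an irreducible minimal polynomial over $\Qee$ whose complete set of conjugates is $\{2\cos\frac{2\pi k}{N}:(k,N)=1\}$, with $k$ and $-k$ giving the same conjugate. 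The substitution $u\mapsto 2u$ turns $\prod(u-\cos\tfrac{r\pi}{m})$ into such a minimal polynomial evaluated at $2u$, so proving irreducibility of $\psi_m$ (or of its factors) reduces to verifying that the roots I group together constitute a full Galois orbit.

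For $m$ even I would take $N=2m$. Since $m$ is even, the condition $(r,m)=1$ forces $r$ odd, whence $(r,2m)=1$; conversely $(r,2m)=1\Rightarrow(r,m)=1$, so the two coprimality conditions coincide, and the range $1\le r<m$ selects exactly one representative from each pair $\{r,2m-r\}$. As $\varphi(2m)=2\varphi(m)$ for even $m$, there are precisely $\varphi(2m)/2=\varphi(m)$ such roots, i.e. all conjugates of $2\cos\frac{\pi}{m}=\zeta_{2m}+\zeta_{2m}^{-1}$. Hence $\psi_m(u)$ equals the minimal polynomial of this totally real algebraic integer evaluated at $2u$ (the prefactor $2^{\varphi(m)}$ being exactly what the substitution produces), and is therefore irreducible over $\Qee[u]$.

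For $m$ odd I would split the roots by the parity of $r$. An even $r=2q$ gives $\cos\frac{2q\pi}{m}=\tfrac12(\zeta_m^{q}+\zeta_m^{-q})$ with $(q,m)=1$ and $1\le q<m/2$; these are exactly the $\varphi(m)/2$ conjugates of $2\cos\frac{2\pi}{m}=\zeta_m+\zeta_m^{-1}$, so after rescaling they assemble into the irreducible polynomial $\psi_m^*(u)$. For odd $r$ I would use $\cos\frac{r\pi}{m}=-\cos\frac{(m-r)\pi}{m}$ and note that $m-r$ is even when both $m$ and $r$ are odd; thus the odd-$r$ roots are precisely the negatives of the even-$r$ roots, so the polynomial collecting them is $\psi_m^*(-u)$. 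Multiplying the two factors accounts for every root of $\psi_m$ (the roots being simple, since the cosines are distinct), and a comparison of leading coefficients fixes the overall constant as $(-1)^{\varphi(m)/2}$. Because $u\mapsto -u$ is a $\Qee$-linear automorphism, $\psi_m^*(-u)$ is irreducible precisely when $\psi_m^*(u)$ is, yielding the two irreducible factors of degree $\varphi(m)/2$.

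I expect the main obstacle to be the bookkeeping that upgrades ``a product over certain roots of unity'' to ``a full Galois orbit'': checking that the conditions $1\le r<m$, $(r,m)=1$ match up exactly, with no double-counting or omission, with a complete set of representatives of $(\Zee/N)^\times/\{\pm1\}$ for the appropriate $N$, separately in the even and odd cases, and (in the odd case) confirming that the parity split partitions the roots cleanly. Once the orbit is correctly identified, irreducibility is immediate from the degree of the real cyclotomic field, and the remaining sign and power-of-two constants are a routine leading-coefficient computation.
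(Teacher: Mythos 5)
Your proposal is correct and takes essentially the same approach as the paper: both realise the roots $\cos\frac{r\pi}{m}$ as $\tfrac{1}{2}\left(\zeta_{2m}^{r}+\zeta_{2m}^{-r}\right)$ inside a cyclotomic field, derive irreducibility from the degree $\varphi(2m)/2$ of the maximal real subfield (together with $\varphi(2m)=2\varphi(m)$ for $m$ even and $\varphi(2m)=\varphi(m)$ for $m$ odd), and split the roots by the parity of $r$, using $\cos\frac{r\pi}{m}=-\cos\frac{(m-r)\pi}{m}$, to obtain $\psi_m(u)=(-1)^{\varphi(m)/2}\psi_m^*(u)\psi_m^*(-u)$ for odd $m$. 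The only difference is presentational: you phrase irreducibility as the roots forming a complete Galois orbit of $\zeta_N+\zeta_N^{-1}$, whereas the paper deduces it by comparing $\deg\psi_m$ with the degree of a single root over $\Qee$ via a tower of extensions.
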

\begin{proof}
To see this, recall from Lemma \ref{vn} that $\psi_m(u)$ have roots $\alpha_r=e( \frac r{2m})+e( \frac {-r}{2m})$, where $1<r<m$ and $(r,m)=1$. The field $\Qee(\alpha_r)$ is the real subfield of $\mathbb Q\left(e( \frac r{2m})\right)$ of relative degree 2. Then we have the following tower of extensions.
\[
\xymatrix{
	\Qee\left(e( \frac r{2m})\right)\ar@{-}[d]^{2} \ar@{-}@/_2pc/[dd]_{\varphi(2m)}\\
	\Qee(\alpha_r)\ar@{-}[d]\\
	\Qee
}
\] 

Therefore $\Qee(\alpha_r)$ has degree $\varphi(2m)/2$ over the rationals, and if $m$ is even, this equals $\varphi(m)$, implying $\psi_m$ is irreducible. However, if $m$ is odd, then $\Qee(\alpha_r)$ has degree $\varphi(m)/2$ over the rationals, and so $\psi_m$ must factor into two irreducible polynomials of degree $\varphi(m)/2$. In particular, if $r=2q$, then $e( \frac r{2m})=e( \frac q{m})$, and if $r$ is odd then write $r=m-2q$ and so 
$e( \frac r{2m})=e( \frac {m-2q}{2m})=-e( \frac {-q}{m})$. We deduce that
\[
\psi_m=\prod_{\substack{1\leq q<m/2\\ (q,m)=1}} \left(2u-\left(e\left( \frac q{m}\right)+e\left( \frac {-q}{m}\right)\right)\right)\left(2u+\left(e\left( \frac q{m}\right)+e\left( \frac {-q}{m}\right)\right)\right).
\]
That is, $\psi_m(u)=(-1)^{\varphi(m)/2} \psi_m^*(u)\psi_m^*(-u)$, where
\begin{align*}
\psi_m^*(u) :&= \prod_{\substack{1\leq q\leq m/2\\ (q,m)=1}}
 \left(2u-\left(e\left( \frac q{m}\right)+e\left( \frac {-q}{m}\right)\right)\right)\\
&=2^{\varphi(m)} \prod_{\substack{1\leq q\leq m/2\\ (q,m)=1}} \left(u-\cos\left( \frac{2q\pi }{m}\right)\right)
\end{align*}
is irreducible.
\end{proof}

In summary, we have written $v_n=v\prod_{\substack{m\mid n\\ m>1}}\psi_m(u)$, with $\psi_m$ integral polynomials only depending on $u$. Observe further that, if $\alpha=\cos\left(r\pi /n\right)$ with $(r,n)=1$, then $\psi_n(\alpha)=0$, but $\psi_m(\alpha)\neq 0$ for all integers $m$ smaller than $n$. That is $\psi_n$ picks out the \emph{``new roots''} of $v_n$. Therefore over $\Cee[t]$, we can write $v_n(t)=\prod \vn_n(t)\vo_n(t)$, where  $\vn_n(t)=\psi_n(u(t))$.

\section{Bounding the number of factors of $\vn_n(t)$}\label{2}
We will use the observation that $\vn_n(t)=\psi_n(u(t))$, together with the factorisation results in section \ref{multisol}, to study the factors and repeated factors of $\vn_n(t)$, as a polynomial in $t$. Whenever necessary, we will adopt the notation $\deg_x(f)$ to indicate the degree of $f$
 as a polynomial in $x$. 

\subsection{Bounds on the number of factors}

\begin{lemma}\label{cor}
Let $\pi(u)\in\Qee[u]$ be a product of $k$ irreducible factors in $\Qee[u]$. Then $P(t):=\pi(u(t))\in\Qee[t]$ has no more than  $k\deg{u}$ irreducible factors over $\Qee[t]$.
\end{lemma}

\begin{proof}
We prove this when $\pi(u)$ is irreducible over $\Qee[u]$, and whenever it is reducible, we multiply the result by the number of irreducible factors of $\pi$.

Suppose $\pi(u)$ is irreducible over $\Qee[u]$. Let $A$ be a root of $\pi(u)$ over $\Cee$. Then $\pi(A)=0$, so there exists $\alpha\in\Cee$, such that $P(\alpha)=\pi(A)=0$. In particular $\alpha$ is a root of $u(t)-A$ and we have the tower of extensions
	\[
\xymatrix{
  \Qee(\alpha)\ar@{-}[d]_{1\leq} \ar@{-}@/^2pc/[dd]^{\geq\deg_u\pi}\\
  \Qee(A)\ar@{-}[d]_{\deg_{u}\pi}\\
  \Qee
}
\]
 yielding that the minimal polynomial of $\alpha$ over the rationals must be of degree at least $\deg_u\pi$. Now $\deg_t{P}=\deg_u\pi\deg{u}$, therefore $P(t)$ has at most 
 $\deg{u}$ irreducible factors over $\Qee[t]$.
\end{proof}

\begin{prop}\label{general+}
	Given $D(t)\in\Qee[t]$, let $(u_n(t),v_n(t))$ be the $n^{\text{th}}$ solution to Pell's equation $X^2(t)-D(t)Y^2(t)=1$, generated by the fundamental solution $(u(t),v(t))$. Then the new factors of $v_n(t)$ are at most $\deg u$ if $n$ is even and $2\deg u$ if $n$ is odd.	
\end{prop}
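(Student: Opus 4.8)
The plan is to reduce the statement to two facts already established: the factorisation $\vn_n(t)=\psi_n(u(t))$ recorded at the end of Section~\ref{multisol}, and the combination of the counting bound of Lemma~\ref{cor} with the irreducibility behaviour of $\psi_n$ over $\Qee[u]$ from Theorem~\ref{modd}. The new factors of $v_n(t)$ are by definition precisely the irreducible factors (over $\Qee[t]$) of $\vn_n(t)=\psi_n(u(t))$, so it suffices to bound the number of these, and the even/odd dichotomy in the conclusion will come directly from the even/odd dichotomy in Theorem~\ref{modd}.

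First I would apply Theorem~\ref{modd} to read off the number $k$ of irreducible factors of $\psi_n$ viewed as a polynomial in $u$ over $\Qee$. When $n$ is even, $\psi_n$ is irreducible, so $k=1$; when $n>1$ is odd, $\psi_n=(-1)^{\varphi(n)/2}\psi_n^*(u)\psi_n^*(-u)$ is a product of two irreducible factors, so $k=2$. By Lemma~\ref{integralpoly} we have $\psi_n\in\Zee[2u]\subseteq\Qee[u]$, so $\psi_n$ genuinely meets the hypothesis of Lemma~\ref{cor}.

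Next I would feed this into Lemma~\ref{cor} with $\pi=\psi_n$. The lemma then gives that $P(t)=\psi_n(u(t))=\vn_n(t)$ has at most $k\deg u$ irreducible factors over $\Qee[t]$. Substituting $k=1$ for $n$ even yields the bound $\deg u$, and $k=2$ for $n$ odd yields $2\deg u$, which is exactly the claim.

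There is no serious obstacle here: the proposition is a direct synthesis of Theorem~\ref{modd} and Lemma~\ref{cor}. The only points requiring care are bookkeeping ones. One is confirming that the \emph{new} factors of $v_n$ are exactly the irreducible factors of $\psi_n(u(t))$ rather than of the full $v_n(t)$, so that the split/irreducible dichotomy of $\psi_n$ alone governs the count. The other is to keep $n>1$ in mind, since the two-factor splitting invoked in the odd case is the content of Theorem~\ref{modd} for odd $m>1$; the degenerate case $n=1$, where $\vn_1=v$ may have arbitrarily many factors, is excluded by the setup.
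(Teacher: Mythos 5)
Your proposal is correct and follows exactly the paper's own argument: identify $\vn_n(t)=\psi_n(u(t))$, read off the irreducible-factor count of $\psi_n$ over $\Qee[u]$ from Theorem \ref{modd} ($1$ for $n$ even, $2$ for $n$ odd), and apply Lemma \ref{cor}. Your added bookkeeping remarks (checking $\psi_n\in\Qee[u]$ via Lemma \ref{integralpoly} and excluding $n=1$) are sensible refinements of the same proof, not a different route.
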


\begin{proof}
	 In the introduction to this chapter, we observed that $\vn_n(t)=\psi_n (u(t))$. Furthermore, from Theorem \ref{modd}, the polynomials $\psi_n(u)$ split into two irreducible factors over $\Qee[u]$ if $n$ is odd and are irreducible if $n$ is even. The result is then a consequence of Lemma \ref{cor}, for $\psi_n(u)$ and $u=u(t)\in\Qee[t]$.
\end{proof}

\subsection{Bounds on the number of repeated factors}\label{boundreprootssec}

Suppose $\alpha\in\Cee$ is a repeated root of some $v_n(t)$. That is $(t-\alpha)^2\mid v_n(t)$, and since the $\vn_n$ have no common roots, we must have $(t-\alpha)^2\mid\vn_m(t)$, for some $m\mid n$. Hence, to understand the repeated factors of $v_n(t)$, it suffices to consider the repeated factors of $\vn_n(t)$. In this section we study their existence and dependence on $n$. 

\begin{thm}\label{boundrep}
 	For any Pellian polynomial $D(t)\in\Cee[t]$, with fundamental solution $(u(t),v(t))$, we define
 	\begin{align*}
 	R(D):=\{\alpha\in\Cee : (t-\alpha)^2 \mid v_n(t)\text{ for some }n\}.
 	\end{align*}
 	Then $\# R(D)\leq \deg{u}-1$.
 \end{thm}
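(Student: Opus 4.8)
The plan is to prove the sharper statement that every repeated root $\alpha$ is a critical point of $u$, i.e. $R(D)\subseteq\{\alpha\in\Cee:u'(\alpha)=0\}$, and then simply count: since $u'$ is a nonzero polynomial of degree $\deg u-1$, it has at most $\deg u-1$ distinct roots. As already observed just before the statement, if $(t-\alpha)^2\mid v_n(t)$ then, because $v_n=\prod_{m\mid n}\vn_m$ with the $\vn_m$ pairwise coprime (Lemma \ref{copr} together with the distinctness of the cosines, recalling $\vn_1=v$ and $\vn_m=\psi_m(u)$ for $m>1$), the factor $(t-\alpha)$ divides exactly one of them. Hence the full multiplicity of $\alpha$ in $v_n$ is carried by a single $\vn_m$, so $(t-\alpha)^2\mid\vn_m$ for one $m\mid n$, and it suffices to treat a repeated root of a single factor $\vn_m$.

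The heart of the argument is the case $m>1$. Since $\vn_m(t)=\psi_m(u(t))$, the chain rule gives $\tfrac{d}{dt}\vn_m(t)=\psi_m'(u(t))\,u'(t)$. If $(t-\alpha)^2\mid\psi_m(u(t))$, then $(t-\alpha)$ divides this derivative, so $\psi_m'(u(\alpha))\,u'(\alpha)=0$. Now $\psi_m(u(\alpha))=0$ forces $u(\alpha)$ to be one of the values $\cos(r\pi/m)$ from Lemma \ref{vn}, and since these are pairwise distinct as $r$ runs over the residues coprime to $m$, the polynomial $\psi_m$ is squarefree; thus $\psi_m'(u(\alpha))\neq 0$, and we conclude $u'(\alpha)=0$.

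The remaining case is a repeated root of the base factor $\vn_1=v$, which is not of the form $\psi_m(u)$ and so I would handle directly from the Pell relation $u^2-1=Dv^2$. If $(t-\alpha)^2\mid v$, then $(t-\alpha)^4\mid Dv^2=(u-1)(u+1)$; as $\gcd(u-1,u+1)$ divides the constant $2$, these factors are coprime, so $(t-\alpha)^4$ — hence in particular $(t-\alpha)^2$ — divides one of them, say $u-\epsilon$ with $\epsilon=\pm1$. Then $\alpha$ is at least a double root of $u-\epsilon$, hence a root of $(u-\epsilon)'=u'$, giving $u'(\alpha)=0$ once more. Combining the two cases yields $R(D)\subseteq\{\alpha:u'(\alpha)=0\}$. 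Finally I would check that $u$ is non-constant: from $u^2=1+Dv^2$ with $\deg D=2d\geq 2$ and $v\neq 0$ one gets $\deg u=d+\deg v\geq 1$, so $u'\neq 0$ has degree $\deg u-1$ and at most $\deg u-1$ distinct roots, giving $\#R(D)\leq\deg u-1$.

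The only genuinely substantive step is the $m>1$ case: the key realisation that a repeated root must be a critical point of $u$, which depends essentially on the squarefreeness of $\psi_m$ so that $\psi_m'$ cannot vanish at its own roots. Everything else is bookkeeping, the two care points being the reduction to a single coprime factor and the separate, slightly ad hoc, treatment of the base factor $v_1=v$.
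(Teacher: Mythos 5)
Your proof is correct and takes essentially the same approach as the paper: reduce via pairwise coprimality to a repeated root of a single $\vn_m$, show that any such root satisfies $u^{\prime}(\alpha)=0$ (handling $\vn_1=v$ separately through the Pell relation $u^2-1=Dv^2$, and $m>1$ through the simplicity of the roots of $\psi_m$), and then count the roots of $u^{\prime}$. The only cosmetic differences are that you invoke the chain rule where the paper isolates a single factor $u(t)-\cos(r\pi/m)$ before differentiating, and that you explicitly check $u$ is non-constant, which the paper leaves implicit.
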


 \begin{proof}
 		By the discussion in the introduction of subsection \ref{boundreprootssec} it suffices to consider repeated factors of $\vn_n(t)$. We first study $\vn_1(t)=v(t)$, as it cannot be expressed as a polynomial in $u$. Suppose $(t-\alpha)^2\mid v(t)$, then $(t-\alpha)^4\mid D(t)v^2(t)$. Since $(u(t),v(t))$ is a solution to Pell's equation, we must have 
 		\begin{align*}
 			(t-\alpha)^4&\mid u^2(t)-1\\
 			\Rightarrow(t-\alpha)^3&\mid u(t)u^{\prime}(t).
 		\end{align*}
Observe that, this implies that $(t-\alpha)^3$ is a factor of $u^{\prime}(t)$, since $u(\alpha)=\pm 1$.
Next suppose that $(t-\alpha)$ is a repeated root of $\vn_n(t)$, for $n>1$. Therefore we must have $(t-\alpha)^2\mid u(t)-\cos{\pi r/n}$, for some positive integer $r<n$, co-prime to $n$. Then $(t-\alpha)$ must be a factor of $u^{\prime}(t)$. In summary the repeated factors of $\vn_n(t)$ over $\Cee[t]$, arise from roots of $u^{\prime}(t)$, and there are at most $\deg{u}-1$ of them.
 \end{proof}

 \begin{cor}
 	For any Pellian $D(t)\in\Cee[t]$, there are only finitely many $n$, for which $\vn_n(t)$ has repeated factors.
 \end{cor}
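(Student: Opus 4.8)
The plan is to leverage Theorem \ref{boundrep}, which already confines every repeated root of every $\vn_n(t)$ to the finite set $R(D)$ of roots of $u'(t)$, and then to show that each individual candidate root can serve as a repeated root for at most one value of $n$. Finiteness of the exceptional set of $n$ then drops out of the finiteness of $R(D)$.

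Concretely, I would first dispose of the base case $n=1$: here $\vn_1(t)=v(t)$ is a single fixed polynomial, so it either does or does not have a repeated factor, contributing at most one value of $n$. For $n>1$, recall that $\vn_n(t)=\psi_n(u(t))$. If $(t-\alpha)^2\mid\vn_n(t)$, then in particular $\alpha$ is a root of $\vn_n$, so the complex number $c:=u(\alpha)$ satisfies $\psi_n(c)=0$. By Theorem \ref{boundrep} we also know $\alpha$ must be a root of $u'(t)$; hence $\alpha$ ranges over the finite set $R(D)$ (of size at most $\deg u-1$), and $c=u(\alpha)$ ranges over the corresponding finite set of critical values of $u$.

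The crucial step is Lemma \ref{copr}: the polynomials $\psi_m$ for $m>1$ have no common roots. Therefore a fixed complex value $c$ can be a root of $\psi_m$ for at most one $m$. Consequently, for each root $\alpha$ of $u'(t)$ the critical value $c=u(\alpha)$ determines at most one $n>1$ for which $\alpha$ could be a repeated root of $\vn_n$. Ranging $\alpha$ over the at most $\deg u-1$ roots of $u'(t)$ yields a finite set of admissible $n$, and adjoining the base case $n=1$ keeps the set finite, which proves the claim.

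Once the two ingredients are in place the argument is essentially bookkeeping, so I expect no serious analytic difficulty; the one point requiring care is the logical direction, namely that being a repeated root forces, a fortiori, being a root, which is precisely what lets Lemma \ref{copr} collapse the admissible $n$ to a single value per critical point. A secondary subtlety worth noting is that most critical values $u(\alpha)$ will be roots of no $\psi_m$ whatsoever (for instance whenever $\arccos(u(\alpha))/\pi$ is irrational), so such $\alpha$ contribute nothing at all; this only strengthens the finiteness conclusion.
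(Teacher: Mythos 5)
Your proof is correct and follows essentially the same route as the paper: both arguments confine all repeated roots of the $\vn_n(t)$ to the finitely many roots of $u^{\prime}(t)$ via Theorem \ref{boundrep} and conclude finiteness of the set of admissible $n$. The only difference is that you spell out the step the paper leaves implicit, namely invoking Lemma \ref{copr} to ensure that each critical value $u(\alpha)$ can be a root of $\psi_m$ for at most one $m$, which is a worthwhile clarification rather than a departure in method.
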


 \begin{proof}
 	In the proof of theorem \ref{boundrep} we showed that if $\alpha$ is a repeated root of $\vn_n(t)$ for any $n$, then $(t-\alpha)\mid u^{\prime}(t)$. Since $u(t)$ is a polynomial, it has finitely many roots over $\Cee$, and therefore there are only finitely many $n$ for which $\vn_n(t)$ has repeated factors.
 \end{proof}
 The proof of theorem \ref{boundrep} gives us a method of explicitly finding all repeated roots of $\vn_n(t)$. Namely, suppose $(t-\alpha)^k\exactmid u^{\prime}(t)$, and if further $u(\alpha)=\cos(\pi r/n)$, for some $r<n$, co-prime to $n$, then $(t-\alpha)^{k+1}\exactmid \vn_n(t)$. Then the repeated factors of $\vn_n(t)$ must arise from repeated roots $\alpha\in\Cee$ of $u(t)-\cos(\pi r/n)$.

We now focus our attention to repeated factors of $\vn_n(t)$ over the rational numbers. Restricting the factors to $\Qee[t]$ implies that the repeated root $\alpha$ must come from a field extension of degree $\da$, satisfying:
\[
\xymatrix{
	\Qee(\alpha)\ar@{-}[d]^{} \ar@{-}@/_3pc/[dd]_{\da}\\
	\Qee\left(\cos\left(\frac{\pi r}{n}\right)\right)\ar@{-}[d]^{\varphi(2n)/2}\\
	\Qee
}
\] 
 \begin{thm}
 	For any Pellian $D(t)\in\Qee[t]$, with fundamental solution $(u(t),v(t))$, if $\vn_n(t)$ has a repeated root, then $n\ll d\log\log{d}$, with $d=\deg{u}$.

 \end{thm}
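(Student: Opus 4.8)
The plan is to combine the structural fact from Theorem \ref{boundrep} — that any repeated root $\alpha$ of $\vn_n(t)$ is a root of $u'(t)$ with $u(\alpha)=\cos(\pi r/n)$ for some $r<n$, $(r,n)=1$ — with the tower of field extensions displayed above and a classical lower bound for the Euler totient. First I would extract the two competing degree constraints on $\da:=[\Qee(\alpha):\Qee]$. Since $u\in\Qee[t]$, we have $\cos(\pi r/n)=u(\alpha)\in\Qee(\alpha)$, so $\Qee(\cos(\pi r/n))\subseteq\Qee(\alpha)$; the middle field $\Qee(\cos(\pi r/n))$ of the displayed tower has degree $\varphi(2n)/2$ over $\Qee$ (its value regardless of the parity of $n$), whence
\[
\frac{\varphi(2n)}{2}\ \Big|\ \da,\qquad\text{in particular}\qquad \frac{\varphi(2n)}{2}\leq \da.
\]
On the other hand $\alpha$ is a root of $u'(t)\in\Qee[t]$, a polynomial of degree $d-1$, so its minimal polynomial over $\Qee$ has degree at most $d-1$, giving $\da\leq d-1<d$. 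Chaining the two bounds yields $\varphi(2n)<2d$.

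Next I would feed $\varphi(2n)<2d$ into the totient lower bound taken from \cite{Sandor2006}: there is an absolute constant $c>0$ with $\varphi(m)\geq c\,m/\log\log m$ for all $m\geq 3$. Applying this with $m=2n$ gives $2cn/\log\log(2n)\leq\varphi(2n)<2d$, that is
\[
n<\frac{d}{c}\,\log\log(2n).
\]
This already exhibits the desired shape $n\ll d\log\log(2n)$, and it remains only to replace $\log\log(2n)$ by $\log\log d$.

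The final step is a one-step bootstrap exploiting how slowly $\log\log$ grows. If $n\leq d$ the claim is immediate for large $d$, so assume $n>d$. From the inequality $n<\frac{d}{c}\log\log(2n)$ one sees that $2n\ll d^{1+o(1)}$, since $\log\log$ is sub-polynomial; hence $\log(2n)\leq(1+o(1))\log d$ and therefore $\log\log(2n)\leq\log\log d+O(1)$. Substituting this back into the same inequality gives $n\ll d\log\log d$, as required. I expect this bootstrapping to be the only genuinely delicate point: one must check that the implied constant from \cite{Sandor2006} survives the substitution of $\log\log(2n)$ by $\log\log d$, and that the sub-sub-logarithmic term $\log\log\log(2n)$ is indeed negligible against $\log d$. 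Everything preceding it is a direct consequence of the degree bounds already in hand.
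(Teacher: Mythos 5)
Your proposal is correct and follows essentially the same route as the paper: the tower $\Qee\subseteq\Qee(\cos(\pi r/n))\subseteq\Qee(\alpha)$ giving $\varphi(2n)/2\leq\da<\deg u$, followed by the totient lower bound $\varphi(m)\gg m/\log\log m$ from the cited reference. Your explicit bootstrap replacing $\log\log(2n)$ by $\log\log d$ is precisely the step the paper compresses into ``simplifying appropriately,'' so you have in fact supplied the detail the paper omits rather than diverged from it.
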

\begin{proof}

Let $\alpha\in\Cee$ algebraic of degree $\da$ be a repeated root of multiplicity $k>1$ of $\vn_n(t)$. Then from the discussion before the statement, together with the Tower law, we deduce that if $\alpha$ is a repeated root of multiplicity $k>1$, of $\vn_n(t)$, then $d=\deg{u}>\da>\varphi(2n)/2$. The Euler totient function satisfies the asymptotic formula $\varphi(n)\gg n/\log\log{n}$, see \cite{Sandor2006}. Combining the two and simplifying appropriately, yields the desired asymptotic $n\ll d\log\log{d}$.
\end{proof}

\section{The degrees of the factors of $\vn_n(t)$}\label{3}
In this section we let $D(t)\in\Qee[t]$ be a Pellian polynomial with fundamental solution $(u,v)$ and we wish to study the degrees of the rational irreducible factors of $\vn_n(t)$.
\subsection{Factors of given degree}\label{factgivendegree}
We once again exploit the fact that $v_n(t)$ can also be written as the composition of two polynomials. This time with the help of the following technical lemma
\begin{lemma}\label{mindegree}
	Let $P,Q\in\Qee[X]$. Any rational factor of $P(Q(X))$ is of degree at least the degree of the smallest rational factor of $P(X)$.
\end{lemma}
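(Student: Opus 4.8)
The plan is to argue through a tower of field extensions, mirroring the degree count already used in Lemma~\ref{cor}. Read ``factor'' as irreducible factor: it suffices to bound the degree of each irreducible factor of $P(Q(X))$ from below, since any factor is divisible by an irreducible one and so inherits the bound. Write $m$ for the smallest degree of an irreducible factor of $P(X)$ over $\Qee$; the goal is to show that every irreducible $g\in\Qee[X]$ dividing $P(Q(X))$ satisfies $\deg g\geq m$.

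First I would fix such a $g$ and choose a root $\beta\in\Cee$ of $g$, so that $[\Qee(\beta):\Qee]=\deg g$ by irreducibility. Setting $A:=Q(\beta)$, the divisibility $g\mid P(Q(X))$ forces $P(A)=P(Q(\beta))=0$, so $A$ is a root of $P$. Since $A$ is a polynomial in $\beta$ with rational coefficients, $A\in\Qee(\beta)$, giving $\Qee\subseteq\Qee(A)\subseteq\Qee(\beta)$, i.e.
\[
\xymatrix{
  \Qee(\beta)\ar@{-}[d]\\
  \Qee(A)\ar@{-}[d]\\
  \Qee
}
\]
The final step is to note that the minimal polynomial of $A$ over $\Qee$ divides $P$ (because $P(A)=0$) and is irreducible by definition, hence is one of the irreducible factors of $P$; therefore $[\Qee(A):\Qee]\geq m$. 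The tower law then yields $\deg g=[\Qee(\beta):\Qee]\geq[\Qee(A):\Qee]\geq m$, as required.

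I do not anticipate a real obstacle: the argument is the same $\Qee(A)\subseteq\Qee(\beta)$ degree comparison that appears in Lemma~\ref{cor}, just run in the opposite direction. The two points that need care are making the phrase ``smallest rational factor'' precise (so that $m$ is the minimum over irreducible factors, well defined and at least $1$), and justifying that the minimal polynomial of $A$ is genuinely one of the irreducible factors of $P$ rather than a proper multiple of it — which is immediate once we observe that $A$ is a root of $P$, so its minimal polynomial both divides $P$ and is irreducible.
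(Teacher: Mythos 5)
Your proof is correct and takes essentially the same approach as the paper's: pick a root $\beta$ of the factor, note that $A=Q(\beta)$ is a root of $P$ lying in $\Qee(\beta)$, and compare $[\Qee(\beta):\Qee]\geq[\Qee(A):\Qee]\geq m$. The paper states this more tersely (phrasing the bound via ``the root of $P$ of smallest degree'' rather than an explicit tower and reduction to irreducible factors), but the underlying argument is identical, and your added care about minimal polynomials dividing $P$ only makes the same proof more rigorous.
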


\begin{proof}
	Let $\alpha\in\Cee$ be the root of $P(X)$ of smallest degree and let $\beta\in\Cee$ be an arbitrary root of $P(Q(X))$. Therefore $Q(\beta)$ will be a root of $P(X)$, and by the minimality of $\alpha$, we will have $\deg{Q(\beta)}\geq \deg{\alpha}$. A final observation that $\deg{\beta}\geq \deg{Q(\beta)}$ completes the proof.
\end{proof}
Using lemma \ref{factgivendegree} for $\vn_n(t)=\psi_n(u(t))$, together with an asymptotic bound on the number of solutions to the equation $\varphi(n)=m$, we get
\begin{thm}\label{bound}
	Let $N$ be a positive integer and define $$I(N):=\{P(t)\in\Qee[t],\text{ irreducible}: \deg{P(t)}\leq N,\ P(t)\mid \vn_n(t)\text{ for some }n\}.$$ For $N$ sufficiently large, $\# I(N)\leq 10N\deg{u}$.
\end{thm}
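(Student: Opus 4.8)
The plan is to combine the composition structure $\vn_n(t)=\psi_n(u(t))$ with the factorisation of $\psi_n$ from Theorem \ref{modd}, the degree lower bound of Lemma \ref{mindegree}, the factor count of Proposition \ref{general+}, and an elementary count of integers with bounded totient.

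First I would pin down which $n$ can possibly contribute a factor of degree at most $N$. Since $\vn_n(t)=\psi_n(u(t))$ is a composition, Lemma \ref{mindegree} shows that every irreducible rational factor of $\vn_n(t)$ has degree at least the degree of the smallest rational factor of $\psi_n(u)$. By Theorem \ref{modd} that smallest factor has degree $\varphi(n)$ when $n$ is even and $\varphi(n)/2$ when $n$ is odd, so in either case it is at least $\varphi(n)/2$. Hence if some $P\in I(N)$ divides $\vn_n(t)$, then $\varphi(n)/2\leq \deg P\leq N$, i.e. $\varphi(n)\leq 2N$. Thus only the finitely many $n$ with $\varphi(n)\leq 2N$ can contribute to $I(N)$.

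Next, for each such $n$ I would bound its number of new irreducible factors. Proposition \ref{general+} gives at most $\deg u$ of them when $n$ is even and at most $2\deg u$ when $n$ is odd, hence at most $2\deg u$ in all cases. Since distinct $\vn_m=\psi_m(u)$ share no roots (Lemma \ref{copr}, because the $\psi_m$ have none), each irreducible $P$ is counted for exactly one $n$; in any event summing the factor bound over the admissible $n$ yields an upper bound:
\[
\#I(N)\ \leq\ \sum_{n\,:\,\varphi(n)\leq 2N} 2\deg u\ =\ 2\deg u\cdot\#\{n:\varphi(n)\leq 2N\}.
\]
The remaining, genuinely number-theoretic, ingredient is to estimate $\#\{n:\varphi(n)\leq x\}$, equivalently to sum the number of solutions of $\varphi(n)=m$ over $m\leq x$. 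Using the standard asymptotic $\#\{n:\varphi(n)\leq x\}\sim A x$ with $A=\zeta(2)\zeta(3)/\zeta(6)\approx 1.94$, and noting $A<\tfrac52$, for all sufficiently large $x$ one has $\#\{n:\varphi(n)\leq x\}\leq \tfrac52 x$. Taking $x=2N$ gives $\#\{n:\varphi(n)\leq 2N\}\leq 5N$, whence $\#I(N)\leq 2\deg u\cdot 5N=10N\deg u$, as claimed.

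I expect the only real obstacle to be this last step: all of the algebraic work is already packaged in the earlier results, and the crux is feeding in a sharp enough count of integers with totient at most $2N$ and checking that its leading constant leaves room under the target $10N\deg u$. The clause ``for $N$ sufficiently large'' is precisely what absorbs the error term in that asymptotic. If instead one wants a bound valid for every $N$, one must replace the asymptotic by an explicit (weaker) upper bound for $\#\{n:\varphi(n)\leq 2N\}$ of polynomial size in $N$, which is what produces the cruder $4N^2\deg u$ estimate mentioned in the introduction.
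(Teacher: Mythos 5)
Your proof is correct and follows essentially the same route as the paper: the degree lower bound from Lemma \ref{mindegree} combined with Theorem \ref{modd} to restrict to $n$ with small totient, the per-$n$ factor count from Proposition \ref{general+}, and the $\zeta(2)\zeta(3)/\zeta(6)$ asymptotic for $\#\{n:\varphi(n)\leq x\}$. The only difference is bookkeeping: the paper splits the count by parity of $n$ (even $n$ with $\varphi(n)\leq N$ contributing $\deg u$ factors each, odd $n$ with $\varphi(n)\leq 2N$ contributing $2\deg u$ each, giving $2N\deg u+8N\deg u$), whereas you use the uniform bounds $\varphi(n)\leq 2N$ and $2\deg u$ with the cruder constant $\tfrac52$, landing on the same $10N\deg u$.
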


\begin{proof}
Suppose that $P(t)$ is a factor of $\vn_n(t)$. By lemma \ref{mindegree}, $\varphi(2n)/2\leq\deg{P}$, since any rational factor of $\vn_n(t)=\psi_n(u(t))$, must be at least the degree of the smallest rational factor of $\psi_n(t)$. Fix the degree of $P$ to be at most some positive integer $N$. To estimate the number of elements of $I(N)$ it suffices to compute the number of integers $n$ that satisfy the inequality $\varphi(2n)/2\leq N$ and multiply it by $\deg{u}$ or $2\deg{u}$ depending on the parity of $n$. Observe that
\begin{align*}
	\#\{n:\varphi(2n)\leq 2N\}=\#\{n \text{ even}:\varphi(n)\leq N\} +\#\{n\ \text{odd}:\varphi(n)\leq 2N\}.
\end{align*}
From \cite{Sandor2006} we have that $\#\{m:\varphi(m)\leq x\}=\frac{\zeta(2)\zeta(3)}{\zeta(6)}x+R(x)$, with $R(x)$ of order at most $x/(\log x)^l$, for any positive $l$. Thus the number of elements in the set is at most $2x$ for $x$ sufficiently large.. Hence $\# I(N)\leq 2N\deg{u}+8N\deg{u}$ for $N$ sufficiently large.
\end{proof}

If we then want an inequality that holds for all positive integers of $N$, we simply use a bound on the Euler totient function. However that makes the bound in theorem \ref{bound} much worse for large values of $N$.

\begin{prop}
Let $N$ be a positive integer, then $\# I(N)\leq 4N^2\deg{u}$.
\end{prop}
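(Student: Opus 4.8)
The goal is to obtain a bound on $\# I(N)$ that holds for \emph{all} positive integers $N$, rather than only asymptotically. As in the proof of Theorem \ref{bound}, the plan is to count the admissible values of $n$ and then multiply by the maximal number of irreducible factors each $\vn_n(t)$ can contribute, which by Proposition \ref{general+} is at most $2\deg u$. The key inequality, coming from Lemma \ref{mindegree} applied to $\vn_n(t)=\psi_n(u(t))$, is that any rational factor of $\vn_n(t)$ has degree at least $\varphi(2n)/2$; hence if $P(t)\mid\vn_n(t)$ with $\deg P\leq N$, then $n$ must satisfy $\varphi(2n)\leq 2N$.

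First I would reduce the problem to bounding $\#\{n:\varphi(2n)\leq 2N\}$. The whole difficulty is now elementary: instead of invoking the asymptotic count from \cite{Sandor2006}, which only gives a clean constant for $N$ large, I would use an explicit lower bound for the totient function valid for every $n$. A convenient and completely elementary one is $\varphi(m)\geq\sqrt{m}$ for all $m\geq 1$ (equivalently $\varphi(m)\geq\sqrt{m/2}\cdot\sqrt{2}$, but $\sqrt{m}$ suffices and is standard). Applying this with $m=2n$ gives $\sqrt{2n}\leq\varphi(2n)\leq 2N$, so that $n\leq 2N^2$. Therefore
\[
\#\{n:\varphi(2n)\leq 2N\}\leq 2N^2,
\]
and multiplying by the factor bound $2\deg u$ from Proposition \ref{general+} would naively give $4N^2\deg u$.

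The one point requiring a little care is that the bound $2\deg u$ is the worst case (it applies when $n$ is odd), so I would simply use it uniformly across all admissible $n$, which is legitimate for an upper bound and yields exactly $\# I(N)\leq 2N^2\cdot 2\deg u=4N^2\deg u$. The main obstacle, such as it is, is choosing an inequality for $\varphi$ that is both elementary and sharp enough to land on the stated constant $4$ rather than something larger; the bound $\varphi(m)\geq\sqrt{m}$ is tight enough precisely because it produces $n\leq 2N^2$ and hence the clean coefficient. I would remark that this estimate is valid for every $N\geq 1$ since $\varphi(m)\geq\sqrt{m}$ holds for all $m\geq 1$, thereby removing the ``$N$ sufficiently large'' hypothesis of Theorem \ref{bound} at the cost of the weaker quadratic dependence on $N$.
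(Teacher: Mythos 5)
Your proposal is correct and follows essentially the same route as the paper: both derive $\varphi(2n)\leq 2N$ from Lemma \ref{mindegree}, apply the elementary bound $\varphi(m)\geq\sqrt{m}$ to get $n\leq 2N^2$, and multiply by the worst-case factor count $2\deg u$ (which you, more accurately than the paper, attribute to Proposition \ref{general+}). No substantive difference.
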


\begin{proof}
	 Similarly to the proof of theorem \ref{bound}, if $P(t)$ is a factor of $\vn_n(t)$, then $\varphi(2n)/2\leq\deg{P}$, since the smallest factor of $\vn_n(t)$ is of degree $\varphi(2n)/2$. Hence any irreducible rational factor of $\vn_n(t)$, of degree up to $N$, satisfies $\varphi(2n)<2N$. Using the lower bound of the Euler totient function $\varphi(n)\geq \sqrt{n}$ and simplifying we obtain $n\leq 2N^2$. Now from lemma \ref{mindegree}, $\vn_n$ has at most $\deg{u}$ irreducible factors for each $n$ even, and $2\deg{u}$ irreducible factors for each $n$ odd. Hence $\# I(N)\leq 4N^2\deg{u}$.
\end{proof}

\begin{cor}\label{factors}
	Let $D(t)\in\Qee[t]$ be a square-free Pellian polynomial and $N$ a positive integer. We define 
	\begin{align*}
		J(N):=\{P(t)\in\Qee[t],\text{ irreducible}: \deg{P(t)}= N,\ P(t)\mid \vn_n(t)\text{ for some }n\}.
	\end{align*}
	Then $\#J(N)<\infty$.       
\end{cor}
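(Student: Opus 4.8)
The plan is to deduce this directly from the finiteness already established for $I(N)$, using the containment $J(N)\subseteq I(N)$: every $P$ counted by $J(N)$ is an irreducible rational polynomial with $\deg{P}=N\leq N$ dividing some $\vn_n(t)$, hence it also lies in the set $I(N)$ of Theorem \ref{bound}. The preceding Proposition gives $\#I(N)\leq 4N^2\deg{u}$ for every $N$, so immediately $\#J(N)\leq\#I(N)\leq 4N^2\deg{u}<\infty$.

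To make the argument self-contained I would re-run the underlying degree bound rather than merely cite it. First I would fix an arbitrary $P\in J(N)$, so $P$ is irreducible over $\Qee$ of degree $N$ and divides $\vn_n(t)=\psi_n(u(t))$ for some $n$. Because $u(t)\in\Qee[t]$, the right-hand side is a genuine composition of polynomials over $\Qee$, so Lemma \ref{mindegree} applies and forces $\deg{P}$ to be at least the degree of the smallest rational factor of $\psi_n$. By Theorem \ref{modd} that smallest factor has degree $\varphi(2n)/2$ (namely $\varphi(n)$ when $n$ is even and $\varphi(n)/2$ when $n$ is odd). Hence $\varphi(2n)/2\leq N$, i.e.\ $\varphi(2n)\leq 2N$.

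Next I would use the divergence of the totient to bound $n$: the elementary estimate $\varphi(m)\geq\sqrt{m}$ already turns $\varphi(2n)\leq 2N$ into $n\leq 2N^2$, so only finitely many $n$ are admissible. For each such $n$, Proposition \ref{general+} shows that $\vn_n(t)$ has at most $2\deg{u}$ irreducible factors over $\Qee[t]$. Therefore $J(N)$ is contained in the union, over the finitely many admissible values of $n$, of the finite factor sets of $\vn_n(t)$, and is finite.

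I do not expect a genuine obstacle here: all the real work lies in Lemma \ref{mindegree} (degree cannot drop under composition) and in the growth of $\varphi$, both already in hand. The only point that needs care is ensuring that $\vn_n(t)=\psi_n(u(t))$ really is a composition over $\Qee$, which is exactly where the standing assumption $u(t)\in\Qee[t]$ enters; this is the sole hypothesis the count genuinely requires, and the bound $\#J(N)\leq 4N^2\deg{u}$ holds for every $N$.
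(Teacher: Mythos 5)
Your proposal is correct and matches the paper's intended argument: the corollary is stated without proof precisely because it follows immediately from $J(N)\subseteq I(N)$ together with the preceding Proposition's bound $\#I(N)\leq 4N^2\deg{u}$, which holds for all $N$ (the large-$N$ restriction of Theorem \ref{bound} is avoided). Your re-derivation via Lemma \ref{mindegree}, the estimate $\varphi(2n)\leq 2N\Rightarrow n\leq 2N^2$, and the factor count from Proposition \ref{general+} is exactly the paper's own proof of that Proposition, so nothing new is needed.
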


Furthermore, we can obtain more explicit results, by considering factors of specific degree  
\begin{thm}\label{linear}
Suppose $D(t)\in\Qee[t]$ is a Pellian polynomial with fundamental solution $(u,v)$. 
\begin{enumerate}
		\item There are no linear polynomials with coefficients in $\Qee$ that divide $\vn_n(t)$, for $n\geq 4$.
	\item There are no quadratic polynomials with coefficients in $\Qee$ that divide $\vn_n(t)$, for $n\geq 7$. 
\end{enumerate}
\end{thm}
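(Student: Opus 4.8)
The plan is to reduce both statements to the single degree bound that every irreducible rational factor of $\vn_n(t)$ has degree at least $\varphi(2n)/2$, and then to read off the thresholds $n\geq 4$ and $n\geq 7$ from the elementary behaviour of Euler's totient function.

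First I would establish that bound. Since $\vn_n(t)=\psi_n(u(t))$, I can apply Lemma~\ref{mindegree} with $P=\psi_n$ and $Q=u$: any rational factor of $\psi_n(u(t))$ has degree at least that of the smallest irreducible rational factor of $\psi_n(u)$. By Theorem~\ref{modd}, $\psi_n$ is irreducible of degree $\varphi(n)$ when $n$ is even, and splits into two irreducible factors of degree $\varphi(n)/2$ when $n$ is odd. Using $\varphi(2n)=2\varphi(n)$ for even $n$ and $\varphi(2n)=\varphi(n)$ for odd $n$, in both cases the smallest irreducible rational factor of $\psi_n$ has degree exactly $\varphi(2n)/2$. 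Hence any $P(t)\in\Qee[t]$ dividing $\vn_n(t)$ satisfies $\deg P\geq \varphi(2n)/2$.

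With this in hand the two parts are finite totient computations. A linear factor would force $\varphi(2n)\leq 2$; the only $k$ with $\varphi(k)\leq 2$ are $\{1,2,3,4,6\}$, whose even members are $2n\in\{2,4,6\}$, i.e.\ $n\in\{1,2,3\}$. Since $\varphi(2n)$ is even for $n\geq 2$, for $n\geq 4$ we get $\varphi(2n)\geq 4$ and so $\deg P\geq 2$, excluding linear factors. Similarly a quadratic factor would force $\varphi(2n)\leq 4$; the $k$ with $\varphi(k)\leq 4$ are $\{1,2,3,4,5,6,8,10,12\}$, whose even members give $n\in\{1,2,3,4,5,6\}$, so for $n\geq 7$ we have $\varphi(2n)\geq 6$ and $\deg P\geq 3$, excluding quadratic factors.

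The genuine content is entirely in the degree bound $\deg P\geq\varphi(2n)/2$, which rests on combining Lemma~\ref{mindegree} with the irreducibility-versus-splitting dichotomy of Theorem~\ref{modd}; after that the theorem is bookkeeping. The step I would watch most carefully is the parity accounting relating $\varphi(n)$ and $\varphi(2n)$, so that the odd case---where $\psi_n$ splits and the minimal factor degree is halved---is assigned the correct bound $\varphi(2n)/2$ rather than $\varphi(n)$. I do not anticipate a real obstacle beyond confirming that the finite lists $\{k:\varphi(k)\leq 2\}$ and $\{k:\varphi(k)\leq 4\}$ are complete, which follows since $\varphi(k)\to\infty$ together with a direct check up to $k=12$.
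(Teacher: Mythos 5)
Your proof is correct and follows essentially the same route as the paper: both derive the key bound that every rational factor of $\vn_n(t)=\psi_n(u(t))$ has degree at least $\varphi(2n)/2$ by combining Lemma~\ref{mindegree} with the irreducibility/splitting dichotomy of Theorem~\ref{modd}, and then finish with elementary totient bookkeeping. Your enumeration of $\{k:\varphi(k)\leq 2\}$ and $\{k:\varphi(k)\leq 4\}$ is just a slightly different packaging of the paper's parity case analysis, and both lists are complete.
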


\begin{proof}
	Similarly to the above two theorems, we use the fact that the smallest factor of $\vn_n(t)$ is of degree $\varphi(2n)/2$. 
\begin{enumerate}
	\item Therefore if $\varphi(2n)/2>1$ there cannot be a rational factor of $\vn_n(t)$. Furthermore, we need $\varphi(n)>1$ and $n$ even or $\varphi(n)>2$ and $n$ odd. If $n\geq 4$ both of those inequalities are satisfied.
	\item For no quadratic rational factors of $\vn_n(t)$, we need $n$ to satisfy $\varphi(2n)/2>2$. In particular, we want $\varphi(n)>2$ and $n$ even and $\varphi(n)>4$ and $n$ odd. For $n\geq 7$ the inequalities hold.
	\end{enumerate}
\end{proof}

Both of these results are actually best possible. To see this, consider the following example: 
\begin{eg}
	Let $D(t)=t^2-1$, then $(t,1)$ is the smallest solution to the corresponding Pell's equation and thus it generates all the others. The only linear factors of $v_n(t)$, are $\vn_2=2t$, and the factors of  $\vn_3$, i.e $2t\pm 1$. Furthermore, the only quadratic irreducible factors are $\vn_4=2t^2-1$, $\vn_6=4t^2-3$ and the factors of $\vn_5$, namely $4t^2\pm 2t-1$.
	For $D(t)=t^4+t^2$, with fundamental solution $(2t^2+1,\ 2)$, we have $\vn_2=2(2t^2+1)$ and $\vn_3=(4t^2+1)(4t^2+3)$, having quadratic irreducible factors.
	Suppose $D(t)=t^8+4t^6+6t^4+5t^2+2$, this has fundamental solution $\left(2t^6+6t^4+6t^2+3,\ 2(t^2+1)\right)$, then $\vn_1=2(t^2+1)$ is a quadratic irrational. 
	\end{eg}

\subsection{Repeated factors of given degree}\label{secrepfacdeg}

Suppose that $\alpha\in\Cee$ is algebraic with minimal polynomial $\pa(t)$ of degree $\da$ over the rational numbers. As discussed in subsection \ref{boundreprootssec}, if $\alpha$ is a repeated root of $\vn_n(t)$, then $(t-\alpha)^k\exactmid u(t)-\cos(\pi r/n)$, for some $r<n$, co-prime to $n$ and $k\geq 2$. We restrict $D, u,v\in\Qee[t]$, and define $w(t)\in\Qee[t]$ to be the remainder when dividing $u(t)$ by $\pa^k(t)$. That is, $w(t)=u(t)-\pa^k(t)q(t)\in\Qee[t]$, with $\deg{w(t)}<k\da$. If $\deg{w(t)}\neq 0$, we can reduce the problem to looking at repeated factors $(t-\alpha)^k\exactmid w(t)-\cos(\pi r/n)$, instead. Differentiating this divisibility condition gives $(t-\alpha)^{k-1}\exactmid w^{\prime}(t)$, and since $w(t)\in\Qee[t]$ we deduce that $\pa^{k-1}\exactmid w^{\prime}(t)$. This yields a lower bound on the degree of $w(t)$, $\deg{w(t)}\geq (k-1)\da+1$. 
 We first examine the case when $w(t)$ is a constant.

\begin{lemma}
	Let $D(t)\in\Qee[t]$ be a Pellian polynomial, with fundamental solution $(u,v)$. Suppose $\alpha\in\Cee$, algebraic of degree $\da$ with minimal polynomial $\pa(t)$, is a repeated root of $\vn_n(t)$ of multiplicity $k>1$. Then the remainder, when dividing $u(t)$ by $\pa^k(t)$, is a constant if and only if $n=1,\ 2,\ \text{or}\ 3$. 
\end{lemma}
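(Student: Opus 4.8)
The plan is to reduce the statement to a single clean equivalence: the remainder $w(t)$ is constant if and only if $u(\alpha)$ is rational. The core of the argument would be a conjugacy (Galois) lifting. Suppose $c\in\Qee$ satisfies $(t-\alpha)^k\mid u(t)-c$; since $u(t)-c\in\Qee[t]$, applying each automorphism $\sigma$ of $\overline{\Qee}/\Qee$ fixes $u-c$, so $(t-\sigma\alpha)^k\mid u(t)-c$ for every conjugate $\sigma\alpha$ of $\alpha$. As $\pa$ is separable in characteristic zero its roots are distinct, the factors $(t-\sigma\alpha)^k$ are pairwise coprime, and hence $\pa^k(t)=\prod_{\sigma\alpha}(t-\sigma\alpha)^k$ divides $u(t)-c$. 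Consequently the remainder of $u$ modulo $\pa^k$ equals $c$, i.e.\ $w$ is the constant $c$. The converse is immediate: if $w(t)=c$ then writing $u=\pa^k q+c$ and evaluating at $\alpha$ gives $u(\alpha)=c$, and $c=w\in\Qee$. So, once we know that $(t-\alpha)^k\mid u(t)-u(\alpha)$ holds in each relevant case, we obtain $w\text{ constant}\iff u(\alpha)\in\Qee$.

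Next I would evaluate $u(\alpha)$. For $n>1$ we have $\vn_n=\psi_n(u)$, and since the cosines are distinct only one factor of $\psi_n$ vanishes at $\alpha$; thus the discussion preceding the lemma gives $(t-\alpha)^k\exactmid u(t)-\cos(\pi r/n)$ with $u(\alpha)=\cos(\pi r/n)$ for some $1\le r<n$, $(r,n)=1$. The question then becomes: for which such $n$ is $\cos(\pi r/n)$ rational? By Niven's theorem the only rational values of the cosine of a rational multiple of $\pi$ are $0,\pm\tfrac12,\pm1$. Scanning the constraints $1\le r<n$ and $(r,n)=1$: the values $\pm1$ never occur (they force $r\in\{0,n\}$); $\cos(\pi r/n)=0$ forces $r=n/2$, hence $n=2$; and $\cos(\pi r/n)=\pm\tfrac12$ forces $r/n\in\{1/3,2/3\}$, hence $n=3$. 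So for $n>1$ the value $u(\alpha)$ is rational precisely when $n\in\{2,3\}$, and the equivalence above then says $w$ is constant exactly for those $n$.

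Finally I would dispatch $n=1$ separately, since $\vn_1=v$ is not of the form $\psi_n(u)$. Here $\alpha$ is a root of $v$ of multiplicity $k$, so the Pell relation $u^2-1=Dv^2$ gives $(t-\alpha)^{2k}\mid(u-1)(u+1)$; as $u(\alpha)=\pm1$ exactly one of $u\mp1$ vanishes at $\alpha$, whence $(t-\alpha)^{2k}\mid u\mp1$ and in particular $(t-\alpha)^k\mid u\mp1$ with $u(\alpha)=\pm1\in\Qee$. The core lifting then forces $w=\pm1$, a constant. Combining the three cases yields that $w$ is constant exactly when $n\in\{1,2,3\}$. The step I expect to be the main obstacle is the conjugacy lifting of the first paragraph: one must verify that rationality of $c=u(\alpha)$ is exactly what lets divisibility by $(t-\alpha)^k$ propagate to all conjugates and hence to $\pa^k$, and that this rationality genuinely fails for $n\ge4$ (so that $w$ is nonconstant there, matching the degree bound $\deg w\ge(k-1)\da+1$ established just before the lemma). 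Once this equivalence is pinned down, the remainder is the bookkeeping of Niven's theorem.
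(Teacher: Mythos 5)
Your proof is correct and follows essentially the same route as the paper's: evaluating $u$ at $\alpha$ to get $\cos(\pi r/n)=w\in\Qee$ and invoking Niven's theorem for the forward direction, then handling $n=1,2,3$ via divisibility of the rational polynomials $v$, $u$, $2u\mp 1$ by $\pa^k$ for the converse. The only difference is cosmetic: you make explicit the conjugation-lifting step (rationality of $c$ lets $(t-\alpha)^k\mid u-c$ propagate to $\pa^k\mid u-c$), which the paper uses implicitly when it writes $\pa^k\exactmid v(t)$ and $\pa^k\exactmid 1\pm 2u(t)$.
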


\begin{proof}
Since $\alpha$ is a repeated root of $\vn_n(t)$, of multiplicity $k>1$, we have $(t-\alpha)^k\exactmid u(t)-\cos(\pi r/n)$. From the Euclidean algorithm, there exists a rational polynomial $w(t)$, given by $w(t)=u(t)-\pa^k(t)q(t)$ of degree less than $k\da$.
	Suppose $\deg{w(t)}=0$, then we must have $w(t)=w\in\Qee$. Furthermore, $\cos(\pi r/n)=u(\alpha)=w$ must be a rational number, which is only true for $n=1,\ 2,\ \text{or}\ 3$. Conversely, suppose that $n=1$, then $\vn_1(t)=v(t)$. For a repeated root $\alpha$ of $v(t)$, we must have $\pa^k\exactmid v(t)$. Hence $\pa^{2k}\exactmid u^2(t)-1$, and therefore $\pa^{2k}\exactmid u(t)\pm 1$, and $w(t)=\pm 1$ for all $t$. For $n=2,\ 3$, $\vn_2(t)=u(t)$ and $\vn_3(t)=1\pm 2u(t)$, both polynomials with coefficients in $\Qee$, and therefore $\pa^k\exactmid u(t)$ or $\pa^k\exactmid 1\pm 2u(t)$. Hence $w(t)=0$ or $w(t)=\pm 1/2$, respectively, for all $t$.  
\end{proof}
\begin{cor}
	The polynomials $\vn_2(t)$, $\vn_3(t)$ have repeated complex root $\alpha$ if and only if $u(t)=q(t)\pa^k(t)$ or $u(t)=q(t)\pa^k(t)\pm 1/2$, respectively.
\end{cor}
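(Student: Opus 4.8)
The plan is to read the corollary off directly from the explicit forms of $\vn_2$ and $\vn_3$ computed in the preceding lemma, the only genuine subtlety being the multiplicity bookkeeping for $\vn_3$. First I would recall that $\vn_2(t)=\psi_2(u(t))=2u(t)$. Hence $\alpha$ is a root of $\vn_2$ of multiplicity $k$ if and only if it is a root of $u(t)$ of multiplicity $k$; since $u\in\Qee[t]$, every conjugate of $\alpha$ occurs with the same multiplicity, so this is equivalent to $\pa^k(t)\exactmid u(t)$, i.e.\ $u(t)=q(t)\pa^k(t)$ with $\pa\nmid q$. The root is repeated precisely when $k\geq 2$, which gives the first assertion.

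For $\vn_3$ the plan is to use that, by Lemma \ref{vn}, $\psi_3(u)$ has roots $\pm 1/2$, so that up to a nonzero constant $\vn_3(t)=(u(t)-1/2)(u(t)+1/2)$. The key observation is that these two factors are coprime: they differ by the constant $1$, hence share no common root, and in particular $u(\alpha)$ cannot simultaneously equal $+1/2$ and $-1/2$. Consequently a repeated root $\alpha$ of $\vn_3$ is a repeated root of exactly one of $u(t)\mp 1/2$, with the same multiplicity $k$. Since $u\mp 1/2\in\Qee[t]$, this is equivalent to $\pa^k(t)\exactmid(u(t)\mp 1/2)$, which rearranges to $u(t)=q(t)\pa^k(t)\pm 1/2$. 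Conversely, any $u$ of this form makes $\alpha$ a root of $u(t)\mp 1/2$ of multiplicity $k\geq 2$, hence a repeated root of $\vn_3$, and the sign $\pm$ is matched by whether $u(\alpha)=\pm 1/2$.

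The only step requiring care — the mild obstacle — is precisely this multiplicity argument in the $\vn_3$ case: one must check that substituting $u=u(t)$ into the two coprime linear factors $u\mp 1/2$ of $\psi_3$ preserves their coprimality, so that the multiplicity of $\alpha$ in $\vn_3$ is carried entirely by the single factor it divides rather than being distributed between them. This is immediate from the observation above, and once it is settled both directions of both equivalences follow at once, completing the proof.
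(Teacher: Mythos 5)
Your proof is correct and follows essentially the same route as the paper, which deduces the corollary from the preceding lemma by reading off $\vn_2=2u(t)$ and $\vn_3=(2u(t)-1)(2u(t)+1)$ and using that these are rational polynomials, so that $\pa^k$ exactly divides the relevant factor. Your explicit coprimality check for the two factors $u\mp 1/2$ is a small detail the paper leaves implicit, but it is not a different approach.
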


We can use this corollary to deduce that for any integer $d\geq 1$ and $k>1$, there exists Pellian rational polynomial $D(t)$, such that $\vn_2(t)$ has a repeated factor $p(t)$ of degree $d$. Pick $\alpha\in\Cee$, algebraic with minimal polynomial $p(t)$ of degree $d\geq 1$ over the rationals, and an integer $k>1$. Let $u(t)=p^k(t)$, then $D(t)=u^2-1\in\Qee[t]$ is Pellian, with fundamental solution $(p^k(t), 1)$. Furthermore, $\vn_2(t)=2p^k(t)$, has a repeated factor of degree $d$ and multiplicity $k$. We can do the same for $\vn_3(t)$.  

We have dealt with the case $w(t)=const$, and simultaneously understood the repeated factors of $\vn_n(t)$ for small values of $n$. Therefore, for our investigation into the degree of repeated roots $\alpha\in\Cee$ of $\vn_n(t)$, we assume $n>3$, equivalently  $\deg{w(t)}>0$, and proceed by case analysis.
\subsubsection{The case of an odd degree $\alpha$}\label{dodd}

Suppose $\alpha\in\Cee$ is an algebraic number of degree $\da>1$, an odd integer. Furthermore, let $\alpha$ be a repeated root of $\vn_n(t)$ for $n>3$, then $(t-\alpha)^k\exactmid u(t)-\cos(\pi r/n)$, for some $r<n$, co-prime to $n$ and $k\geq 2$. Then we have the following tower of extensions 

\[
\xymatrix{
	\Qee(\alpha)\ar@{-}[d]^{} \ar@{-}@/_3pc/[dd]_{\da}\\
	\Qee\left(\cos\left(\frac{\pi r}{n}\right)\right)\ar@{-}[d]^{\frac{\varphi(2n)}{2}>1}\\
	\Qee
}
\] 

From the Tower Law, $\da$ must be exactly divisible by $\varphi(2n)/2$. Furthermore, 
\begin{align*}
	\frac{\varphi(2n)}{2}=\begin{cases}
	\varphi(n),\ &\text{if $n$ is even}\\
	\varphi(n)/2,\ &\text{if $n$ is odd.}
	\end{cases}
\end{align*}
Since $\da$ is odd and $\varphi(n)$ is even for all integers $n>3$, the only possibility is for $\varphi(n)/2$ with $n$ odd, to divide $\da$. 
\begin{prop}\label{odd}
Let $D(t)\in\Qee[t]$ be Pellian with fundamental solution $(u,v)$. Suppose that for $n>3$, the polynomial $\vn_n(t)$ has a repeated root $\alpha$ of odd degree $\da$, then $n=q^s$, where $q\equiv 3\mod 4$ is prime and $s$ is a positive integer. Moreover, $\da$ must be a multiple of $\varphi(n)/2$. 
\end{prop}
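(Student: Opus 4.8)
The plan is to extract the divisibility constraint from the tower of field extensions displayed immediately before the statement, and then convert the oddness hypothesis on $\da$ into a $2$-adic valuation condition on $\varphi(n)$ that forces the shape of $n$.

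First I would apply the Tower Law to $\Qee \subseteq \Qee(\cos(\pi r/n)) \subseteq \Qee(\alpha)$. Since $[\Qee(\cos(\pi r/n)):\Qee] = \varphi(2n)/2$ and $[\Qee(\alpha):\Qee] = \da$, this yields $\varphi(2n)/2 \mid \da$, which already supplies the ``moreover'' clause once we know $n$ is odd. Because $\da$ is odd by hypothesis, $\varphi(2n)/2$ must itself be odd. Using the case split recorded just before the statement, namely that $\varphi(2n)/2$ equals $\varphi(n)$ for $n$ even and $\varphi(n)/2$ for $n$ odd, I would first rule out even $n$: for even $n > 3$ the quantity $\varphi(n)$ is even, contradicting oddness. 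Hence $n$ is odd, and the condition reduces to $\varphi(n)/2$ being odd, i.e. $v_2(\varphi(n)) = 1$, where $v_2$ denotes the $2$-adic valuation.

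The remaining step is purely number-theoretic. Writing $n = p_1^{a_1}\cdots p_r^{a_r}$ with the $p_i$ distinct odd primes, multiplicativity gives $\varphi(n) = \prod_i p_i^{a_i-1}(p_i-1)$; each factor $p_i - 1$ is even, so $v_2(\varphi(n)) \geq r$. Thus $v_2(\varphi(n)) = 1$ forces $r = 1$, so $n = q^s$ is a prime power with $q$ an odd prime. For such $n$ we have $\varphi(n) = q^{s-1}(q-1)$ with $q^{s-1}$ odd, whence $v_2(\varphi(n)) = v_2(q-1)$, and $v_2(q-1) = 1$ is precisely the congruence $q \equiv 3 \pmod 4$. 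This completes the identification $n = q^s$ with $q \equiv 3 \pmod 4$.

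The whole argument is elementary once the tower is in hand, so I anticipate no genuine obstacle beyond careful $2$-adic bookkeeping. The single point worth stating cleanly is that distinct odd prime divisors of $n$ each contribute an independent factor of $2$ to $\varphi(n)$: this is simultaneously what forces the prime-power shape (at most one such prime may occur) and what pins down $q \bmod 4$ (the valuation of the lone factor $q-1$ must be exactly $1$).
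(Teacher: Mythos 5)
Your proposal is correct and takes essentially the same approach as the paper: the Tower Law on $\Qee \subseteq \Qee\left(\cos\left(\frac{\pi r}{n}\right)\right) \subseteq \Qee(\alpha)$ yields $\varphi(2n)/2 \mid \da$, oddness of $\da$ rules out even $n$ and forces $\varphi(n)/2$ to be odd, and the shape $n=q^s$ with $q\equiv 3 \bmod 4$ follows. Your $2$-adic valuation bookkeeping is exactly the content of the paper's Lemma \ref{l1}, whose proof rests on the same observation that distinct odd prime divisors of $n$ each contribute a factor of $2$ to $\varphi(n)$; you have merely inlined that lemma.
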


\begin{proof}

	From the introduction of this section \ref{dodd}, we know that since $\da$ is odd, we must have $n$ and $\varphi(n)/2$ both odd. Lemma \ref{l1}, to follow, implies that $n=q^s$, for a prime $q\equiv 3\mod 4$, and in that case $\varphi(n)/2=(q-1)q^{s-1}/2$, which must divide $\da$. 
\end{proof}


Furthermore, if we only consider prime odd degree, we can say more.
\begin{thm}\label{prime}
Suppose $D(t)\in\Qee[t]$ is Pellian with fundamental solution $(u,v)$. If for any $n>3$, the polynomial $\vn_n(t)$ has a repeated root $\alpha$ of a prime odd degree $\da$, then $n=2\da+1$ is also prime or $n=9$, in which case $\alpha$ is cubic.
\end{thm}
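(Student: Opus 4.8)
The plan is to deduce everything from Proposition \ref{odd}, specialised to the case $\da = p$ prime. Since $\alpha$ has odd degree and $n>3$, Proposition \ref{odd} already tells us that $n = q^s$ for some prime $q \equiv 3 \bmod 4$ and integer $s \geq 1$, and moreover that $\varphi(n)/2$ divides $\da = p$. The whole argument then reduces to exploiting this single divisibility: because $p$ is prime, $\varphi(n)/2 \in \{1, p\}$, and I would split into these two cases.

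In the case $\varphi(n)/2 = 1$, i.e.\ $\varphi(n) = 2$, the only possibilities are $n \in \{3,4,6\}$; none of these is simultaneously of the form $q^s$ with $q \equiv 3 \bmod 4$ and larger than $3$, so this case cannot occur under our standing assumption $n>3$. In the case $\varphi(n)/2 = p$, I substitute $n = q^s$ to obtain $\varphi(q^s) = q^{s-1}(q-1) = 2p$, and then treat the exponent $s$ separately. For $s = 1$ this reads $q - 1 = 2p$, so $q = 2p+1$; since $p$ is odd we have $2p \equiv 2 \bmod 4$, so this $q$ is automatically $\equiv 3 \bmod 4$, and hence $n = q = 2p+1 = 2\da+1$ is prime, giving the first conclusion.

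For $s \geq 2$ I would use that $q$ is odd, so $q^{s-1}$ is an odd divisor of $2p$ and therefore already divides $p$; primality of $p$ then forces $q^{s-1} = p$, since $q^{s-1} \geq q \geq 3 > 1$. This gives $s = 2$ and $q = p$, and feeding this back into $q(q-1) = 2p = 2q$ yields $q = 3$. Hence $n = q^2 = 9$ and $\da = q = 3$, i.e.\ $\alpha$ is cubic, which is the second conclusion.

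The main obstacle is really upstream, in Proposition \ref{odd} (together with the not-yet-proved Lemma \ref{l1}), which supplies both the shape $n = q^s$ with $q \equiv 3 \bmod 4$ and the exact-divisibility constraint $\varphi(n)/2 \mid \da$; once these are in hand, the remainder is an elementary case analysis driven entirely by the prime factorisation of $2p$. The only steps demanding a little care are the implication $q^{s-1} \mid 2p \Rightarrow q^{s-1} \mid p$, which relies on $q$ being odd, and the verification that $q = 2p+1$ does lie in the congruence class $3 \bmod 4$ forced by Proposition \ref{odd}.
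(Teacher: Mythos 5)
Your proof is correct and follows essentially the same route as the paper: the paper reduces to $\da=\varphi(n)/2$ with $n$ odd and then invokes its Lemma \ref{l2}, whose proof is exactly your case analysis of $q^{s-1}(q-1)=2p$ resting on Lemma \ref{l1}. Your only deviation is organizational --- you cite Proposition \ref{odd} and keep the divisibility $\varphi(n)/2\mid\da$, ruling out the trivial divisor $\varphi(n)/2=1$ by hand, whereas the paper gets equality at once from the fact that $[\Qee(\cos\frac{r\pi}{n}):\Qee]>1$ divides the prime $\da$.
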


\begin{proof}

	Suppose $\da$ is an odd prime and $n>3$, then we must have $[\Qee(\cos\frac{r\pi}{n}):\Qee]>1$ and thus $\da=\varphi(n)/2$, with $n$ odd. We employ a property of the Euler totient function, which we prove in lemma \ref{l2}, to show that a prime $\da=\varphi(n)/2$ if and only if $n=2\da +1$ or $n=9$ and $\da =3$.
\end{proof}

We now state and prove the technical lemmas on the properties of the Euler totient function needed in the proofs of proposition \ref{odd} and theorem \ref{prime}.
\begin{lemma}\label{l1}
	Suppose $m>3$ is an odd integer. Then $\varphi(m)/2$ is odd if and only if $m=q^s$ with $q\equiv 3 \mod 4$ prime and $s\geq 1$ an integer.
\end{lemma}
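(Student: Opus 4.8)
The plan is to translate the parity condition on $\varphi(m)/2$ into a statement about the $2$-adic valuation of $\varphi(m)$, and then read it off from the prime factorisation. Observe first that $\varphi(m)/2$ is odd precisely when $\varphi(m)$ is divisible by $2$ but not by $4$; writing $v_2$ for the $2$-adic valuation, this is the condition $v_2(\varphi(m))=1$. That $\varphi(m)$ is even at all is automatic since $m>2$.

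First I would factor $m=p_1^{a_1}\cdots p_r^{a_r}$ into distinct prime powers; because $m$ is odd, every $p_i$ is an odd prime. Using multiplicativity of Euler's totient,
\[
\varphi(m)=\prod_{i=1}^r p_i^{a_i-1}(p_i-1).
\]
Since each factor $p_i^{a_i-1}$ is odd, the powers of $2$ come entirely from the terms $p_i-1$, so that $v_2(\varphi(m))=\sum_{i=1}^r v_2(p_i-1)$.

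The key elementary observation is that for an odd prime $p$ one has $v_2(p-1)\geq 1$ always, with $v_2(p-1)=1$ exactly when $p\equiv 3\pmod 4$ and $v_2(p-1)\geq 2$ exactly when $p\equiv 1\pmod 4$. Hence every summand is at least $1$, and the total equals $1$ if and only if there is a single prime factor (so $r=1$) whose contribution is exactly $1$. This forces $m=q^s$ with $q\equiv 3\pmod 4$. Conversely, if $m=q^s$ with $q\equiv 3 \pmod 4$, then $v_2(\varphi(m))=v_2(q-1)=1$, giving the reverse implication.

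I do not expect any real obstacle here: the argument is a clean valuation count, and the only point to be careful about is that the summands $v_2(p_i-1)$ are strictly positive, which is exactly what rules out more than one prime factor once we demand a single factor of $2$ in $\varphi(m)$.
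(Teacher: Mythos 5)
Your proof is correct and follows essentially the same route as the paper: factor $m$ into odd prime powers and observe that each factor $q_i-1$ contributes at least one power of $2$ to $\varphi(m)$, so demanding exactly one factor of $2$ forces a single prime $q\equiv 3 \pmod 4$. The only differences are cosmetic --- you phrase the count via $2$-adic valuations where the paper argues by divisibility ($4\mid\varphi(m)$), and you spell out the trivial converse direction, which the paper leaves implicit.
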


\begin{proof}
		Since $m$ is an odd integer, then it can be represented as $\prod_{i=1}^k q_i^{s_i}$, where $q_i$ are distinct odd primes and $s_i$ are positive integers. For each $i$, we have $q_i-1\mid \varphi(m)$. Now if $q_i\equiv 1 \mod 4$, for some $i$, then $\varphi(m)/2$ is even. Hence $q_i\equiv 3\mod 4$ for all $i$. Furthermore, if we have two distinct primes $q_i,q_j$ both dividing $m$, then $(q_i-1)(q_j-1)\mid\varphi(m)$, and once again $\varphi(m)/2$ is even. Therefore, the only possibility for $m$ is to be a power of a prime $q\equiv 3 \mod 4$. 
\end{proof}

\begin{lemma}\label{l2}
	Suppose $m$ is an odd integer greater than $3$, and $p$ is an odd prime. Then $\varphi(m)/2=p$ if and only if $m=2p+1$is prime or $m=9$ and $p=3$.
\end{lemma}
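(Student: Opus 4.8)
The plan is to prove the biconditional by treating the (easy) converse by direct computation and reducing the forward direction to the prime-power classification already obtained in Lemma \ref{l1}.

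For the converse, both cases are immediate: if $m=2p+1$ is prime then $\varphi(m)=m-1=2p$, so $\varphi(m)/2=p$; and if $m=9$ with $p=3$ then $\varphi(9)/2=6/2=3=p$. These require no more than evaluating $\varphi$.

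For the forward direction, suppose $\varphi(m)/2=p$. Since $p$ is an odd prime, $\varphi(m)/2$ is odd, so Lemma \ref{l1} applies and forces $m=q^s$ for a prime $q\equiv 3\mod 4$ and some integer $s\geq 1$. The equation then becomes $\varphi(q^s)=q^{s-1}(q-1)=2p$, and I would finish by splitting on $s$. If $s=1$ this reads $q-1=2p$, giving $m=q=2p+1$, which is prime; note this is automatically consistent with $q\equiv 3 \mod 4$ since $p$ odd makes $2p+1\equiv 3\mod 4$. If $s\geq 2$, then $q$ divides $q^{s-1}(q-1)=2p$, and as $q$ is odd this forces $q=p$; substituting reduces the equation to $p^{s-2}(p-1)=2$.

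The crux, though still elementary, is this last Diophantine step. For $s=2$ it gives $p-1=2$, hence $p=3$ and $m=9$, recovering the exceptional case. For $s\geq 3$ the left-hand side satisfies $p^{s-2}(p-1)\geq 3\cdot 2=6>2$, so no further solutions occur. Combining the two cases yields exactly the two families in the statement, completing the equivalence.
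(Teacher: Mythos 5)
Your proof is correct and follows essentially the same route as the paper: both reduce via Lemma \ref{l1} to $m=q^s$ with $q\equiv 3 \bmod 4$ prime and then solve $q^{s-1}(q-1)=2p$, with the converse handled by direct evaluation of $\varphi$. In fact your case split on $s$ (including the step $q\mid 2p\Rightarrow q=p$ and the bound $p^{s-2}(p-1)\geq 6$ for $s\geq 3$) supplies the Diophantine details that the paper's proof merely asserts as ``two possibilities.''
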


\begin{proof}
	From lemma \ref{l1}, since $p$ is odd we must have $m=q^s$, where $q\equiv 3\mod 4$ prime, and $s$ a positive integer. Therefore $\varphi(m)/2=(q-1)q^{s-1}/2$. For this expression to be equal to the prime $p$, we have two possibilities. Either $q=2p+1$ and $s=1$, yielding $m=2p+1$, or $q=p=3$, $s=2$, giving $m=9$ and $2p+1=7$. In both cases, $2p+1$ is prime. 

	Conversely, if $2p+1$ is prime, then $\varphi(2p+1)=2p$ and $\varphi(9)=2\times3$. Hence $\varphi(m)/2$ is prime for $m=2p+1$ and $m=9$.
\end{proof}

\begin{rk}
	We can use theorem \ref{prime} to discount $\alpha$'s of odd prime degree $\da$. Namely, if $2\da+1$ is not a prime, then $\vn_n(t)$ for $n>3$, has no repeated root of degree $\da$. For example, we cannot have $\alpha$ of degree $7,\ 13,\ 17,\ 19$, etc. Furthermore prime numbers $p$, such that $2p+1$ is prime are called Sophie Germain primes and are a rare occurrence. In particular they form a density $0$ subset of the primes. Therefore, for most primes $p$, there never are repeated roots of $\vn_n(t)$, of degree $p$.
\end{rk}


\subsubsection{The case of $\alpha$, quadratic}

\begin{prop}\label{d}
	Suppose $D(t)\in\Qee[t]$ is Pellian with fundamental solution $(u,v)$. Let $\alpha\in\Cee$ lie in a quadratic extension over the rational numbers. If $\alpha$ is a repeated root of $\vn_n(t)$ for $n>3$, then $n=4,\ 5$ or $6$ and $\alpha\in\Qee(\sqrt{l})$, for $l=2,\ 5$ or $3$, respectively.
\end{prop}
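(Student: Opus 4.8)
The plan is to reuse the tower of fields recorded in subsection~\ref{secrepfacdeg}, specialised to $\da=[\Qee(\alpha):\Qee]=2$. Since $\alpha$ is a repeated root of $\vn_n(t)$ with $n>3$, the analysis there gives $u(\alpha)=\cos(\pi r/n)$ for some $r<n$ coprime to $n$, so $\cos(\pi r/n)\in\Qee(\alpha)$ and hence $\Qee(\cos(\pi r/n))\subseteq\Qee(\alpha)$. The Tower Law then forces $[\Qee(\cos(\pi r/n)):\Qee]=\varphi(2n)/2$ to divide $\da=2$. Because $n>3$ excludes $\cos(\pi r/n)\in\Qee$ (the rational case occurs only for $n=1,2,3$), we have $\varphi(2n)/2>1$, so the only option is $\varphi(2n)/2=2$, i.e.\ $\varphi(2n)=4$. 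In particular the intermediate field fills up the whole extension: $\Qee(\cos(\pi r/n))=\Qee(\alpha)$.

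Next I would solve $\varphi(2n)=4$. The integers $m$ with $\varphi(m)=4$ are exactly $m\in\{5,8,10,12\}$, and since $m=2n$ is even this leaves $2n\in\{8,10,12\}$, that is $n\in\{4,5,6\}$. This already pins down the three admissible values of $n$.

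It then remains to identify, for each such $n$, the quadratic field $\Qee(\alpha)=\Qee(\cos(\pi r/n))$ by evaluating the relevant cosine. For $n=4$ one has $\cos(\pi/4)=\tfrac12\sqrt2$, giving $\Qee(\alpha)=\Qee(\sqrt 2)$; for $n=6$, $\cos(\pi/6)=\tfrac12\sqrt3$ gives $\Qee(\alpha)=\Qee(\sqrt 3)$; and for $n=5$, the classical value $\cos(\pi/5)=\tfrac14(1+\sqrt5)$ gives $\Qee(\alpha)=\Qee(\sqrt 5)$. (The remaining admissible values of $r$ produce cosines lying in the same quadratic field, so the answer does not depend on the choice of $r$.) This yields precisely the three cases $l=2,5,3$ attached to $n=4,5,6$.

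The computations are elementary, so the only steps requiring care are at the start: confirming that $\varphi(2n)/2>1$ for every $n>3$ — so that the divisor $\varphi(2n)/2$ of $2$ cannot equal $1$ — and correctly enumerating the solutions of $\varphi(2n)=4$. The least automatic of the explicit evaluations is the identification of $\Qee(\cos(\pi/5))$ with $\Qee(\sqrt5)$, but this is standard.
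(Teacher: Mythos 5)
Your proposal is correct and follows essentially the same route as the paper's own proof: both use the Tower Law to force $[\Qee(\cos(\pi r/n)):\Qee]=\varphi(2n)/2$ to equal $2$ (the paper splits this as $\varphi(n)=2$ with $n$ even or $\varphi(n)=4$ with $n$ odd, while you enumerate $\varphi(2n)=4$ directly, an equivalent computation), and both then identify the fields from the explicit cosine values $\cos(\pi/4)$, $\cos(\pi/5)$, $\cos(\pi/6)$. Your added justification that $\Qee(\cos(\pi r/n))\subseteq\Qee(\alpha)$ via $u(\alpha)=\cos(\pi r/n)$ makes explicit a step the paper leaves implicit, but the argument is the same.
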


\begin{proof}
	Since $n$ is greater than $3$, $1<[\Qee\left(\cos\left(\frac{r\pi}{n}\right)\right):\Qee]$, and, by the Tower Law, is a factor of $[\Qee(\alpha):\Qee]=2$. In particular,  $\Qee(\alpha)=\Qee\left(\cos\left(\frac{\pi r}{n}\right)\right)$ and we have the following tower of extensions:
		\[
\xymatrix{
	\Qee(\sqrt{l})=\Qee\left(\cos\left(\frac{\pi r}{n}\right)\right)\ar@{-}[d]^{\frac{\varphi(2n)}{2}>1}\ar@{-}@/_3pc/[d]_{2}\\
	\Qee
}
\] 

Therefore, $\varphi(n)=2$ with $n$ even, i.e. $n=4,\ 6$ or $\varphi(n)=4$ with $n$ odd, i.e. $n=5$. Consequently, $\Qee(\alpha)=\Qee(\sqrt{2})$ or $\Qee(\sqrt{3})$, in the former case since the cosines are either $\pm \sqrt{2}/2$ or $\pm\sqrt{3}/2$, respectively. In the latter case, ${\Qee(\alpha)=\Qee(\sqrt{5})}$, since $\cos\frac{\pi r}{5}=\pm(1\pm\sqrt{5})/4$.
\end{proof}

\begin{prop}
Suppose $D(t)\in\Qee[t]$ is Pellian with fundamental solution $(u,v)$. Let $\alpha\in\Cee$ be an algebraic integer with minimal polynomial $\pa$, lying in a quadratic extension over the rational numbers. If $\alpha$ is a repeated root of $\vn_n(t)$ for $n>3$, of multiplicity $k>1$, then $u(t)=g(t)\pa^k(t)+ w(t), \ \text{where}$
	\begin{align*}
		w(t)=\int_{\alpha}^{t}a_k\pa^{k-1}(x) dx +\cos\frac{\pi r}{n},
	\end{align*}
and
\begin{align*}
		a_k=\frac{(2k-1)!g}{(-4l)^{k-1}s^{2k-1}((k-1)!)^2},
	\end{align*}
with $g=\pm1/2$, when $l=2,\ 3$ and $g=\pm 1/4$, when $l=5$. And $s$ is a quantity which can be determined from the computation in the proof. 

\end{prop}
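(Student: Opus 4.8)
The plan is to combine the structural facts already in place with a single integration and two interpolation conditions. By Proposition \ref{d}, the hypotheses force $n\in\{4,5,6\}$ and $\alpha$ to be a real quadratic irrational lying in $\Qee(\sqrt l)$ with $l=2,5,3$ respectively, so $\pa(t)=(t-\alpha)(t-\bar\alpha)$ is the monic minimal polynomial and the nontrivial automorphism $\sigma$ of $\Qee(\sqrt l)/\Qee$ interchanges $\alpha$ and $\bar\alpha$. Writing the Euclidean division $u(t)=g(t)\pa^k(t)+w(t)$ with $\deg w<k\da=2k$, the discussion opening subsection \ref{secrepfacdeg} gives $\pa^{k-1}\exactmid w'(t)$ and $\deg w=2k-1$. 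Hence $\pa^{k-1}\mid w'$ with $\deg w'=\deg\pa^{k-1}=2k-2$, so the quotient is a (nonzero) constant, i.e.
\[
w'(t)=a_k\,\pa^{k-1}(t).
\]

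Integrating this and fixing the constant of integration by $w(\alpha)=u(\alpha)=\cos(\pi r/n)$ (valid since $\pa(\alpha)=0$) immediately yields the claimed shape
\[
w(t)=\int_\alpha^t a_k\,\pa^{k-1}(x)\,dx+\cos\frac{\pi r}{n},
\]
so it remains only to determine $a_k$. For this I would evaluate $w$ at the conjugate: applying $\sigma$ to $u(\alpha)=\cos(\pi r/n)$, and using $\cos(\pi r/n)\in\Qee(\alpha)=\Qee(\sqrt l)$ from Proposition \ref{d}, gives $w(\bar\alpha)=u(\bar\alpha)=\overline{\cos(\pi r/n)}$. Subtracting the two evaluations of the integral form produces the single determining equation
\[
a_k\int_\alpha^{\bar\alpha}\pa^{k-1}(x)\,dx=\overline{\cos\frac{\pi r}{n}}-\cos\frac{\pi r}{n}=-2g\sqrt l,
\]
where $g$ is the coefficient of $\sqrt l$ in $\cos(\pi r/n)$; reading off the explicit cosines listed in Proposition \ref{d} gives $g=\pm1/2$ for $l=2,3$ and $g=\pm1/4$ for $l=5$, exactly as asserted.

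The remaining task is a clean evaluation of the integral. I would recenter: set $m=(\alpha+\bar\alpha)/2\in\Qee$ and define $s$ by $\alpha-\bar\alpha=2s\sqrt l$, so that $\pa(x)=(x-m)^2-s^2l$; this is the quantity $s$ named in the statement. After the substitution $y=x-m$ the integral becomes $\int_{s\sqrt l}^{-s\sqrt l}(y^2-s^2l)^{k-1}\,dy$, a Beta-type integral which evaluates, via the standard identity $\sum_{j=0}^{k-1}(-1)^j\binom{k-1}{j}/(2j+1)=4^{k-1}((k-1)!)^2/(2k-1)!$ (equivalently Wallis' formula), to
\[
(-1)^k\,\frac{2\cdot 4^{k-1}((k-1)!)^2}{(2k-1)!}\,s^{2k-1}l^{k-1}\sqrt l.
\]
Substituting into the determining equation, cancelling $\sqrt l$, and absorbing $4^{k-1}l^{k-1}$ together with the sign $(-1)^{k-1}$ into $(-4l)^{k-1}$ yields precisely
\[
a_k=\frac{(2k-1)!\,g}{(-4l)^{k-1}s^{2k-1}((k-1)!)^2}.
\]

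The only genuine obstacle is the sign bookkeeping in this last step: tracking the three contributions — reversing the limits, the factor $(y^2-s^2l)^{k-1}=(-1)^{k-1}(s^2l-y^2)^{k-1}$, and converting $(4l)^{k-1}$ to $(-4l)^{k-1}$ — so that they collapse to $(-1)^{2k}=1$, together with correctly extracting $(\sqrt l)^{2k-1}=s^{2k-1}l^{k-1}\sqrt l$. Everything else is forced: the degree count pins down $w'=a_k\pa^{k-1}$, and the two interpolation conditions at $\alpha$ and $\bar\alpha$ fix both the constant of integration and the scalar $a_k$.
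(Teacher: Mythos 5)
Your proof is correct, and its skeleton coincides with the paper's: the degree count from the opening of subsection \ref{secrepfacdeg} forces $w^{\prime}(t)=a_k\pa^{k-1}(t)$ with $a_k\in\Qee^*$, and integrating with the constant fixed by $w(\alpha)=\cos(\pi r/n)$ gives the integral form of $w$. Where you genuinely diverge is in determining $a_k$. The paper recentres via $\pa(t)=t^2+2bt+c$, expands the antiderivative binomially, writes $\cos(\pi r/n)=h+g\sqrt{l}$, and demands that the coefficient of $\sqrt{l}$ in the rational polynomial $w(t)$ vanish; this yields $g=a_k s^{2k-1}l^{k-1}\sum_{j=0}^{k-1}\binom{k-1}{j}\frac{(-1)^{k-1-j}}{2j+1}$, and a combinatorial lemma evaluates the sum as $\frac{(-4)^{k-1}((k-1)!)^2}{(2k-1)!}$. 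You instead implement the same rationality constraint through the Galois automorphism of $\Qee(\sqrt{l})/\Qee$: evaluating $w$ at both conjugates produces the single equation $a_k\int_{\alpha}^{\bar{\alpha}}\pa^{k-1}(x)\,dx=-2g\sqrt{l}$, and you evaluate the conjugate-to-conjugate integral by parity of the integrand plus Wallis' identity. Note that your identity $\sum_{j=0}^{k-1}\binom{k-1}{j}\frac{(-1)^{j}}{2j+1}=\frac{4^{k-1}((k-1)!)^2}{(2k-1)!}$ is exactly the paper's lemma up to a factor $(-1)^{k-1}$, and the paper even proves that lemma via $\int_0^1(x^2-1)^{k-1}dx$, so the two computations are two packagings of the same integral. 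Your version buys a tidier derivation (one evaluation instead of a coefficient-by-coefficient comparison, with the $\sqrt{l}$-cancellation automatic from $\sigma$-invariance), while the paper's expansion makes the full polynomial $w(t)$ explicit, not just $a_k$. Your sign bookkeeping is right: the integral equals $(-1)^k\frac{2\cdot 4^{k-1}((k-1)!)^2}{(2k-1)!}s^{2k-1}l^{k-1}\sqrt{l}$, which gives precisely the stated $a_k$; and your $s$ (defined by $\alpha-\bar{\alpha}=2s\sqrt{l}$) agrees with the paper's (defined by $\alpha+b=s\sqrt{l}$), so the final formulas coincide literally.
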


\begin{proof}
Let $\alpha$ be as in the statement of the theorem, with minimal polynomial $\pa(t)$ of degree $\da=2$. From the discussion at the beginning of subsection \ref{secrepfacdeg}, we know $(k-1)\da+1\leq \deg{w(t)}<k\da$.  Therefore, for $\alpha$ a quadratic irrational, $\deg w(t)=2k-1$. In addition, since $w^{\prime}(t)\in\Qee[t]$ and $(t-\alpha)^{k}$ is a factor of $w(t)-\cos(\pi r /n)$, we must have $\pa^{k-1}\exactmid w^{\prime}(t)$, and thus ${\deg \pa^{k-1}(t)=2k-2=\deg w^{\prime}(t)}$. Hence
	\begin{align*}
 	w^{\prime}(t)&=a_k\pa^{k-1}(t), \ \text{for}\ a_k\in\Qee^*,\ \text{and} \\
    w(\alpha)    &=\cos\frac{\pi r}{n}.
 \end{align*}
Equivalently,
\begin{align*}
	w(t)=\int_{\alpha}^{t} a_k\pa^{k-1}(t) dx+\cos\frac{\pi r}{n}.
\end{align*}
To completely determine $w(t)$, it remains to compute the coefficient $a_k$.

Suppose that $\alpha$ has minimal polynomial $\pa(t)=t^2+2bt+c\in\Qee[t]$. Then using $T=t+b$, we can rewrite it as $P_{\alpha}(T)=T^2-A$, where $A=b^2-c$. From Proposition \ref{d} we know that $\Qee(\alpha)=\Qee(\sqrt{l})$, for $l=2,\ 3$ or $5$. Hence $A=s^2l$, for some rational number $s$, and
 \begin{align*}
 	w(t)
 	    &=\int_{\alpha+b}^{t+b}a_k(X^2-A)^{k-1} dX + \cos\frac{\pi r}{n}\\
 	     &=a_k\int_{s\sqrt{l}}^{t+b} \sum_{j=0}^{k-1}\binom{k-1}{j}X^{2j}(-s^2l)^{k-1-j}dX + \cos\frac{\pi r}{n}\\
 	     &=a_k\sum_{j=0}^{k-1}\binom{k-1}{j}(-s^2l)^{k-1-j}\left[\frac{(t+b)^{2j+1}}{2j+1}-\frac{s^{2j+1}l^j\sqrt{l}}{2j+1}\right]+ \cos\frac{\pi r}{n}.\\
 \end{align*}
 Furthermore, $\cos\frac{\pi r}{n}$ is also in a quadratic extension of the rationals, so let it be of the form $h+g\sqrt{l}$, with $h,g\in\Qee$. Now, $w(t)$ is a rational polynomial and thus the value of $g$ will be such that it cancels the coefficient of $\sqrt{d}$ in the sum above. Namely,
\begin{align*}
	 g=& a_ks^{2k-1}l^{k-1}\sum_{j=0}^{k-1}\binom{k-1}{j}\frac{(-1)^{k-1-j}}{2j+1}.
\end{align*}
In a lemma given after the proof, we show that the sum in the expression for $g$ evaluates to $$\frac{(-4)^{k-1}(k-1)!}{(2k-1)!}.$$ After rearranging appropriately, we obtain the required form for $a_k$.
\end{proof}

\begin{lemma}
	For a positive integer $n$, we have the following combinatorial identity $$\sum_{j=0}^{n}\binom{n}{j}\frac{(-1)^{n-j}}{2j+1}=\frac{(-4)^{n}(n!)^2}{(2n+1)!}.$$
\end{lemma}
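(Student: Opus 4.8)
The plan is to prove the identity by converting the alternating binomial sum into a single definite integral, where it becomes a classical evaluation. The key observation is that $\frac{1}{2j+1}=\int_0^1 x^{2j}\,dx$. Since the sum over $j$ is finite, I can interchange it with the integral freely, and then recognise the resulting polynomial via the binomial theorem:
\[
\sum_{j=0}^{n}\binom{n}{j}\frac{(-1)^{n-j}}{2j+1}=\int_0^1\sum_{j=0}^{n}\binom{n}{j}(x^2)^j(-1)^{n-j}\,dx=\int_0^1(x^2-1)^n\,dx.
\]
This reduces the combinatorial statement to computing a single integral.

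Next I would extract the sign by writing $\int_0^1(x^2-1)^n\,dx=(-1)^n\int_0^1(1-x^2)^n\,dx$, and evaluate the remaining integral. There are two clean routes. The trigonometric substitution $x=\sin\theta$ turns it into the Wallis integral $\int_0^{\pi/2}\cos^{2n+1}\theta\,d\theta=\frac{(2n)!!}{(2n+1)!!}$. Alternatively, the substitution $u=x^2$ expresses it through the Beta function as $\tfrac12 B\!\left(\tfrac12,n+1\right)=\tfrac12\frac{\Gamma(1/2)\,\Gamma(n+1)}{\Gamma(n+3/2)}$. Either computation yields the value $\frac{4^n(n!)^2}{(2n+1)!}$.

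Finally I would carry out the bookkeeping. Using $(2n)!!=2^n n!$ and $(2n+1)!!=\frac{(2n+1)!}{2^n n!}$ (equivalently the standard value $\Gamma(n+1/2)=\frac{(2n)!}{4^n n!}\sqrt{\pi}$), the Wallis value simplifies to $\frac{4^n(n!)^2}{(2n+1)!}$, and restoring the factor $(-1)^n$ extracted above gives exactly $\frac{(-4)^n(n!)^2}{(2n+1)!}$, as claimed.

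I do not expect a genuine obstacle here. The interchange of summation and integration is justified immediately because the sum is finite, so the only place demanding care is the final simplification, where one must track the double-factorial (or Gamma-function) identities accurately. As an independent check, the cases $n=0,1,2$ can be verified directly, giving $1$, $-2/3$, and $8/15$ respectively, in agreement with the right-hand side.
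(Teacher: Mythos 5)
Your proposal is correct, and its first (and crucial) step coincides exactly with the paper's: both recognise $\frac{1}{2j+1}=\int_0^1 x^{2j}\,dx$ and use the binomial theorem to collapse the sum into $\int_0^1(x^2-1)^n\,dx$. The difference lies in how that integral is then evaluated. The paper stays self-contained: integrating by parts and writing $x^2=(x^2-1)+1$ gives the recursion $f(n)=-2n\left(f(n)+f(n-1)\right)$, hence $f(n)=\frac{-2n}{2n+1}f(n-1)$, which iterates down to $f(0)=1$ and yields the closed form directly. You instead pull out the sign and appeal to classical evaluations of $\int_0^1(1-x^2)^n\,dx$ — the Wallis integral $\int_0^{\pi/2}\cos^{2n+1}\theta\,d\theta$ or the Beta function $\tfrac12 B\left(\tfrac12,n+1\right)$ — followed by double-factorial or Gamma-function bookkeeping, which you carry out correctly. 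What the paper's route buys is that nothing external is cited: the recursion is derived on the spot in three lines. What your route buys is immediacy if the reader already knows Wallis or Beta; the mild cost is that those classical formulas are themselves usually proved by precisely the integration-by-parts recursion the paper writes out, so your argument is in effect the same proof with its engine outsourced. Either version is complete and correct; your numerical checks at $n=0,1,2$ match the right-hand side.
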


\begin{proof}

Let $f(n)=\int_0^1 (x^2-1)^n dx$. After expanding binomially, we get 
\begin{align*}
	f(n)=\int_0^1 (x^2-1)^n dx&=\int_0^1\sum_{j=0}^{n}\binom{n}{j}(-1)^{n-j}x^{2j} dx\\
&=\sum_{j=0}^{n}\binom{n}{j}\frac{(-1)^{n-j}}{2j+1}.
\end{align*}

Integrating $f(n)$ by parts, we obtain a recursive relation
\begin{align*}
	f(n)
        &=[x(x^2-1)^n]_0^1-\int_0^1 2nx^2(x^2-1)^{n-1} dx\\
        &=-2n\int_0^1 x^2(x^2-1)^{n-1} dx\\
        &=-2n\left(f(n)+f(n-1)\right).
\end{align*}

Therefore
\begin{align*}
	   f(n)=\frac{-2n}{2n+1}f(n-1)&=f(0)\prod_{i=1}^n \frac{(-1)^i2i}{2i+1}
	       =\frac{(-4)^{n}(n!)^2}{(2n+1)!}.
\end{align*}
The final equality follows since $f(0)=1$.
\end{proof}

\begin{cor}\label{integral}
	The polynomials $\vn_n(t)$ for $n>3$ and $u(t)\in\Zee[t]$, have no quadratic irrationals as repeated roots of multiplicity $k>1$.
\end{cor}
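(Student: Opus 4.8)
The plan is to split on whether the root $\alpha$ is an algebraic integer, using Proposition \ref{d} to reduce to three concrete families of cosines and then contradicting integrality of $u$. First I would invoke Proposition \ref{d}: a repeated quadratic root $\alpha$ of $\vn_n(t)$ with $n>3$ forces $n\in\{4,5,6\}$, $\alpha\in\Qee(\sqrt{l})$ with $l\in\{2,3,5\}$, and
\[
u(\alpha)=\cos\frac{\pi r}{n}\in\left\{\pm\tfrac{\sqrt{2}}{2},\ \pm\tfrac{\sqrt{3}}{2},\ \tfrac{\pm 1\pm\sqrt{5}}{4}\right\}.
\]
The one fact about these numbers I would record at the outset is that none of them is an algebraic integer: their monic minimal polynomials $t^2-\tfrac12$, $t^2-\tfrac34$ and $t^2\mp\tfrac12 t-\tfrac14$ all have non-integral coefficients.

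If $\alpha$ is itself an algebraic integer the contradiction is immediate and does not even use the multiplicity: since $u\in\Zee[t]$ the value $u(\alpha)$ lies in $\Zee[\alpha]$ and is therefore an algebraic integer, whereas $u(\alpha)=\cos(\pi r/n)$ is not. Note this already rules out quadratic \emph{integer} roots of any multiplicity, so the genuine content is the case $\alpha\notin\mathcal O_{\Qee(\sqrt{l})}$, which is exactly the situation realised by simple roots such as those of $\vn_4=2t^2-1$.

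The heart of the argument, and the main obstacle, is therefore the non-integral case. Here I would use the decomposition $u(t)=g(t)\pa^k(t)+w(t)$ of the preceding proposition together with its explicit leading coefficient $a_k=\frac{(2k-1)!\,g}{(-4l)^{k-1}s^{2k-1}((k-1)!)^2}$ of $w'=a_k\pa^{k-1}$, whose derivation does not really use integrality of $\alpha$. Writing the division $u=g\pa^k+w$ over each $p$-adic ring $\Zee_p$, for every prime $p$ not dividing the denominators of the coefficients of the monic polynomial $\pa$ the factor $\pa^k$ is still monic over $\Zee_p$, so $w\in\Zee_p[t]$ and hence $a_k\in\Zee_p$. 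Comparing this with the explicit formula is where the work lies: the denominator contributes $(-4l)^{k-1}s^{2k-1}$ whose $2$-adic (and $l$-adic) valuation grows linearly in $k$, while the numerator $(2k-1)!/((k-1)!)^2$ has $p$-adic valuations that grow only logarithmically in $k$. The delicate point I expect to fight with is bookkeeping the valuation $v_p(s)$ against the denominators of $\pa$ — since $A=b^2-c=s^2l$ ties $s$ directly to those denominators — so as to exhibit a prime $p$ with $v_p(a_k)<0$, contradicting $a_k\in\Zee_p$; the finitely many small $k$ for which the asymptotic comparison is inconclusive would then be checked by hand.
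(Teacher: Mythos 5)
Your proposal splits into an integral and a non-integral case, and the first thing to note is that the paper's own proof only ever treats the integral one: the Proposition it invokes hypothesises that $\alpha$ is an algebraic \emph{integer}, and the Corollary's argument needs this, since the inference ``$u\in\Zee[t]$ implies $w\in\Zee[t]$, hence $a_k\in\Zee$'' uses that $\pa^k$ is monic with integer coefficients; the contradiction is then $2$-adic, by rewriting $\frac{(2k-1)!}{4^{k-1}((k-1)!)^2}=\frac{2k-1}{2^{2k-2}}\binom{2k-2}{k-1}$ and applying Kummer's theorem. On that same case your argument is correct, genuinely different, and much shorter: $u(\alpha)\in\Zee[\alpha]$ is an algebraic integer, while $\cos(\pi r/n)$ for $n\in\{4,5,6\}$ is not, so no multiplicity hypothesis and none of the $a_k$ machinery is needed. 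It is also more robust. The paper's valuation count silently drops the factors $g$, $l^{k-1}$ and $s^{2k-1}$ from $a_k$; if $\pa=t^2+2bt+c\in\Zee[t]$ has $2b$ odd, then $v_2(s)=-1$, so the term $-(2k-1)v_2(s)=+(2k-1)$ wipes out the $-(2k-2)$ coming from $4^{k-1}$ and the claimed negativity of $v_2(a_k)$ disappears. Your integrality argument has no such loose end, so for algebraic integers you have improved on the paper.

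The gap is your second case, and it is genuine — but it is a gap relative to the \emph{statement}, not relative to the paper, whose proof does not touch non-integral $\alpha$ at all (and this case is not vacuous: every quadratic root in the paper's own examples, e.g.\ the roots $\pm 1/\sqrt{2}$ of $\vn_4=2t^2-1$, is non-integral). You are right that the derivation of $u=g\pa^k+w$ and of the formula for $a_k$ goes through over $\Qee$ without integrality, but your $p$-adic bookkeeping hits exactly the obstruction you anticipate, and in some configurations it is fatal rather than delicate. If $2$ divides a denominator of $\pa$ (as for $\pa=t^2-\tfrac12$), then $p=2$ is inadmissible and the entire $2$-adic engine is unavailable; if in addition $l$ divides a denominator, say $\pa=t^2-\tfrac{1}{12}$ with $l=3$, $s=\tfrac16$, $g=\pm\tfrac12$, then every admissible prime satisfies $p\geq 5$, where $v_p(g)=v_p(s)=v_p(4l)=0$ and hence $v_p(a_k)=v_p(2k-1)+v_p\binom{2k-2}{k-1}\geq 0$: no prime yields a contradiction for any $k$. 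So ruling out such $\alpha$ requires an idea beyond valuations of $a_k$ (or a separate argument that such $\pa$ cannot arise as the minimal polynomial of a repeated root). As it stands, your proposal proves everything the paper's proof proves, more simply, but neither it nor the paper establishes the corollary for non-integral quadratic irrationals.
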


\begin{proof}
	If $\alpha$ is as in the statement and $u(t)\in\Zee[t]$, then $w(t)\in\Zee[t]$. This implies that $a_k\in\Zee$. However, for $k>1$, the power of $2$ dividing $$\frac{(2k-1)!}{4^{k-1}((k-1)!)^2}$$ is smaller than $0$. To see this, we rearrange the expression to get
	\begin{align*}
	\frac{2k-1}{2^{2k-2}}\binom{2k-2}{k-1}.
	\end{align*}

We apply Kummer's theorem which says that for a prime $p$, $p^l\mid\binom{n}{m}$ only if $p^l\leq n$. And $2^{2k-2}>2k-2$ for $k>1$, hence $a_k$ is not an integer for any integer $k$ greater than 1. 
\end{proof}

\section*{Appendix}
 \subsection*{Non square-free Pellian polynomials}
It is well-known that for every positive integer $D$, $x^2-Dy^2=1$ has non-trivial solutions, but this no longer holds true for every polynomial $D(t)\in\Cee[t]$. In general, it is fairly difficult to determine whether a given complex polynomial is Pellian or not. However, for a polynomial with roots of high multiplicity, as a consequence of the ABC theorem \cite{Mason1984}\cite{Stothers1981}, Dubickas and Steuding in \cite{Dubickas2004} give an easy criterion that we can check.



\begin{thm}\label{nonpell}
	If the number $n(D)$ of distinct zeros of $D\in\Cee[x]$ is less than or equal to $\frac{1}{2}\deg D$, then the polynomial Pell equation has no non-trivial solutions in $\Cee[x]$.
\end{thm}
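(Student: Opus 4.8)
The plan is to prove the contrapositive by contradiction, using the polynomial ABC theorem (the Mason--Stothers inequality) cited in the text. Suppose, for contradiction, that $D$ has $n(D)\leq\frac12\deg D$ distinct zeros, yet Pell's equation admits a non-trivial solution $(u,v)$ with $v\neq 0$. Since $D$ is non-constant and $v\neq 0$, the identity $u^2=Dv^2+1$ forces $u$ to be non-constant as well. First I would record the coprimality that the Pell relation supplies automatically: any common factor of $u$ and $Dv$ would divide $u^2-Dv^2=1$, so $u$ is coprime to both $D$ and $v$ (the pair $D,v$ need not be coprime, and I will not need it to be).

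Next I would apply Mason--Stothers to the triple $A=u^2$, $B=-Dv^2$, $C=-1$, which satisfies $A+B+C=0$, is pairwise coprime by the previous step, and is not identically constant. The theorem gives
\[
\max\bigl(\deg u^2,\ \deg(Dv^2)\bigr)\leq n_0(ABC)-1,
\]
where $n_0$ counts distinct roots. Because squaring creates no new roots, $n_0(ABC)=n_0(u^2Dv^2)$ equals the number of distinct roots of $uDv$, and since $u$ is coprime to $Dv$ this is at most $n(u)+n(D)+n(v)$, bounded in turn by $\deg u+n(D)+\deg v$.

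The key bookkeeping step is the degree relation: comparing leading terms in $u^2=Dv^2+1$ (the constant $1$ cannot affect the top degree, as $\deg(Dv^2)\geq 2$) yields $2\deg u=\deg D+2\deg v$, that is $\deg u-\deg v=\tfrac12\deg D$. Substituting $\deg u^2=2\deg u$ on the left and the root bound on the right of the Mason--Stothers inequality gives $2\deg u\leq \deg u+n(D)+\deg v-1$, hence $\deg u-\deg v\leq n(D)-1$. Combining with $\deg u-\deg v=\tfrac12\deg D$ produces $n(D)\geq\tfrac12\deg D+1$, contradicting the assumption $n(D)\leq\tfrac12\deg D$.

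I expect the only real obstacle to be the careful invocation of Mason--Stothers — verifying the pairwise coprimality and the not-all-constant hypothesis — together with the distinct-root bookkeeping that replaces $u^2Dv^2$ by its radical $uDv$; the degree comparison and the closing arithmetic are routine. One should also confirm the edge case where $v$ is constant, which is subsumed by the argument since then $\deg v=0$ and nothing changes.
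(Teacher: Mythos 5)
Your proof is correct, and it follows exactly the route the paper intends: the paper states this theorem without proof, importing it from Dubickas--Steuding \cite{Dubickas2004}, whose argument is precisely this application of the Mason--Stothers (polynomial ABC) inequality to $u^2 - Dv^2 - 1 = 0$. Your coprimality check, the radical bound $n_0(u^2Dv^2) \leq \deg u + n(D) + \deg v$, and the degree relation $\deg u - \deg v = \tfrac{1}{2}\deg D$ (valid since the paper's setting has $\deg D = 2d \geq 2$, so $D$ is non-constant) assemble into the cited result with no gaps.
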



Observe that given a separable polynomial $F(t)\in\Cee[t]$, and a square-free polynomial $D(t)\in\Cee[t]$, both of positive degree, and relatively prime, then $n(F^2D(t))=\deg{F(t)}+\deg{D(t)}>\frac{1}{2}\deg{F^2D(t)}$. Hence, for polynomials with coefficients in $\Cee$, and a single square factor, Theorem \ref{nonpell} cannot be used to determine whether they are Pellian or not. We thus focus our attention on polynomials of that form.

 Suppose that $F^2D(t)$ is a Pellian with complex coefficients. Then there exist polynomials $X(t),Y(t)\in\Cee[t]$ solving the corresponding Pell's equation. Moreover, $(X(t),FY(t))$ solves Pell's equation for $D(t)$. On the other hand we have the following.
 \begin{lemma}\label{converse}
 	If $D(t)\in\Cee[t]$ is Pellian with solutions $(u_n(t),v_n(t))$ then $\Delta(t)=F^2D(t)$ is also Pellian, if and only if $F(t)\mid v_n(t)$ for some $n$. 
 \end{lemma}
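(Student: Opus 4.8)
The plan is to reduce both implications to the single algebraic identity
$X^2-\Delta Y^2 = X^2 - D(FY)^2$, which converts solutions of Pell's equation for $\Delta=F^2D(t)$ into solutions for $D(t)$ and back again, and then to invoke the fact recorded in the introduction (cf. \eqref{solgen}) that every polynomial solution of Pell's equation for $D(t)$ lies in the family $(u_n(t),v_n(t))_{n\in\Zee}$ generated by the fundamental solution.

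For the forward implication I would begin, as already noted in the paragraph preceding the statement, with a non-trivial solution $(X(t),Y(t))\in\Cee[t]^2$, $Y\neq 0$, of $X^2-\Delta Y^2=1$. Writing $\Delta=F^2D$ gives $X^2-D(FY)^2=1$, so that $(X,FY)$ solves Pell's equation for $D(t)$ with non-zero second component. By completeness of the family of solutions, $(X,FY)=(\pm u_n,\pm v_n)$ for some $n$, whence $FY=\pm v_n$ and therefore $F(t)\mid v_n(t)$, as required.

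For the converse, suppose $F(t)\mid v_n(t)$ for some $n$ and write $v_n=FW$ with $W\in\Cee[t]$. Substituting into the relation $u_n^2-Dv_n^2=1$ yields $u_n^2-F^2DW^2=u_n^2-\Delta W^2=1$, so $(u_n,W)$ solves Pell's equation for $\Delta$. Since $v_n\neq 0$ and $F\neq 0$, we have $W\neq 0$, so this solution is non-trivial and $\Delta(t)$ is Pellian.

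The two computations are routine; the only point requiring genuine care is the appeal to completeness in the forward direction. I would want to confirm that the family $(u_n,v_n)_{n\in\Zee}$, together with the sign changes $(\pm u_n,\pm v_n)$, really exhausts all polynomial solutions of Pell's equation for $D(t)$ — this is the polynomial analogue of \eqref{solgen} and is precisely what guarantees that the $v$-component $FY$ produced above is, up to sign, one of the $v_n$. A secondary bookkeeping point is to keep the second component non-zero throughout (that is, $Y\neq 0$ in the forward step and $W\neq 0$ in the converse), so that ``Pellian'' is consistently read in the sense of admitting a non-trivial solution.
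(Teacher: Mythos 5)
Your proof is correct and follows essentially the same route as the paper: the backward direction divides $F$ out of $v_n$ to produce a non-trivial solution for $\Delta$, and the forward direction converts a solution $(X,Y)$ for $\Delta$ into the solution $(X,FY)$ for $D$ and then appeals to completeness of the family $(\pm u_n,\pm v_n)_{n\in\Zee}$, exactly as the paper does (the paper merely phrases this step as a proof by contradiction). If anything, your write-up is the more careful one: you keep the non-triviality conditions $Y\neq 0$ and $W\neq 0$ explicit and state precisely which multiplication by $F$ goes with which equation, details the paper's two-line sketch leaves implicit.
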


\begin{proof}
	If $V_n(t)=Fv_n(t)$ for some $n$, then $(u_n(t), V_n(t))$ is a solution to Pell for $\Delta(t)$, and thus $\Delta(t)$ is Pellian. The converse follows if we argue by contradiction and use that if $(u(t),v(t))$ is a solution to Pell for $D(t)$, then $(u(t),Fv(t))$ is a solution to Pell for $F^2D(t)$.
\end{proof}
Therefore, all Pellian polynomials of the form $F^2D$, arise from a square-free Pellian polynomial $D(t)$ and a factor $F$ of $\vn_n(t)$. This lemma gives a simple method for checking whether a polynomial $F^2D\in\Qee[t]$ is Pellian or not. Furthermore,




\begin{prop}\label{construct}
Let $D(t)\in\Qee[t]$ be  square-free and Pellian. Then for a given positive integer $f$, there exist only finitely many irreducible $F\in\Qee[t]$, of degree $f$, such that $F^2D$ is also Pellian.
\end{prop}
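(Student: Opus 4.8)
The plan is to reduce the claim to the finiteness of the set $J(f)$ from Corollary \ref{factors}, by translating the Pellian condition on $F^2D$ into a divisibility condition on the $v_n(t)$ via Lemma \ref{converse}.

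First I would apply Lemma \ref{converse}: since $D(t)$ is Pellian with solution sequence $(u_n(t),v_n(t))$, the polynomial $F^2D(t)$ is Pellian precisely when $F(t)\mid v_n(t)$ for some $n$. So it is enough to show that only finitely many irreducible $F\in\Qee[t]$ of degree $f$ can divide some $v_n(t)$.

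Next I would replace $v_n$ by its new-factor decomposition. Recall from Section \ref{multisol} that $v_n=\prod_{m\mid n}\vn_m$ and, by Lemma \ref{copr}, that the $\vn_m$ are pairwise coprime. If $F$ is irreducible and $F\mid v_n$, then $F$ divides the product $\prod_{m\mid n}\vn_m$; since $\Qee[t]$ is a unique factorisation domain, $F$ must divide one of the factors $\vn_m$. Conversely, $\vn_m\mid v_m$ for every $m$, so any irreducible divisor of some $\vn_m$ divides the corresponding $v_m$. Hence, for irreducible $F$, the conditions ``$F\mid v_n$ for some $n$'' and ``$F\mid\vn_m$ for some $m$'' are equivalent.

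Restricting now to $\deg F=f$, the set of admissible $F$ is exactly the set $J(f)$ of Corollary \ref{factors}, which is finite; this proves the proposition. The genuinely substantial input---the bound $\varphi(2n)/2\leq\deg P$ from Lemma \ref{mindegree} combined with the asymptotics for $\#\{m:\varphi(m)\leq x\}$, yielding finiteness of the relevant $n$---has already been established in Theorem \ref{bound} and Corollary \ref{factors}, so here the only step requiring care is the equivalence of the two divisibility conditions, which rests on the coprimality of the $\vn_m$ (Lemma \ref{copr}) and on unique factorisation in $\Qee[t]$.
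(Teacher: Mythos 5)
Your proof is correct and follows essentially the same route as the paper's: reduce via Lemma \ref{converse} to the condition $F\mid v_n$ for some $n$, observe that an irreducible factor of $v_n$ must divide some $\vn_m$, and invoke the finiteness of $J(f)$ from Corollary \ref{factors}. The only difference is that you spell out the middle step (via unique factorisation in $\Qee[t]$ and the decomposition $v_n=\prod_{m\mid n}\vn_m$), which the paper states without proof.
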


\begin{proof}
	From lemma \ref{converse}, $F$ must be a factor of $v_n(t)$. Furthermore, any such factor arises from a factor of $\vn_n(t)$. By corollary \ref{factors}, there are only finitely many such factors of a fixed degree. 
\end{proof}
\begin{eg}
	Let $D(t)=t^2-1$ and we wish to find all quadratic polynomials $F\in\Qee[t]$ such that $F^2(t^2-1)$ is Pellian. These polynomials must be factors of $\vn_n(t)$ for some $n$. From theorem \ref{linear}, we should only look at $n\leq 6$. Therefore, the only quadratic polynomials $F$, for which $F^2(t^2-1)$ is Pellian, are $\vn_4(t),\ \vn_6(t)$ and the factors of $\vn_5(t)$. Respectively, these are given by
	\begin{align*}
	 	2t^2-1,\ 4t^2-3,\text{ and } 4t^2\pm 2t-1,
	 \end{align*} 
	 respectively.
\end{eg}

This result comes as a contrast to the classical case, where for any positive integer $d$, Pell's equation for $g^2d$ has non-trivial solutions for infinitely many $g$. For details see chapter 8 of \cite{LeVeque2012}.

\bibliographystyle{plain}
\bibliography{Arxiv-reproots}

\end{document}